\newcounter{Definitioncount}
\newtheorem{theorem}{Theorem}
\newtheorem{proposition}[theorem]{Proposition}
\newtheorem{corollary}[theorem]{Corollary}
\theoremstyle{definition}
\newtheorem{definition}{Definition}
\newtheorem*{Definition}{Definition}
\newtheorem{remark}[theorem]{Remark}
\newtheorem{example}{Example}
\newtheoremstyle{fact}{\bigskipamount}{\medskipamount}{\upshape}{}{\itshape}{. }{ }{Fact}
\theoremstyle{fact}
\newtheoremstyle{quest}{\bigskipamount}{\medskipamount}{\upshape}{}{\itshape}{. }{ }{Question}
\theoremstyle{quest}
\newtheoremstyle{step}{2\bigskipamount}{\medskipamount}{\upshape}{}{\itshape}{. }{ }{\underline{Step~\thestep}}
\theoremstyle{step}
\renewcommand{\thestep}{\arabic{step}}
\newcommand{\lra}{\longrightarrow}
\newcommand{\Ra}{\Rightarrow}
\newcommand{\Lra}{\Longrightarrow}
\newcommand{\ldual}[1]{\mathord{{\let\nolimits\relax\sideset{^\wedge}{}{#1}}}}
\newcommand{\laction}[2]{\mathord{{\let\nolimits\relax\sideset{^{#1}}{}{#2}}}}
\newcommand{\conj}[2]{\mathord{{\let\nolimits\relax\sideset{^{#1}}{}{#2}}}}
\newcommand{\ox}{\otimes}
\newcommand{\xra}{\xrightarrow}
\newcommand{\xla}{\xleftarrow}
\newcommand{\xRa}{\xRightarrow}
\def\CA{{\mathscr A}}
\def\CB{{\mathscr B}}
\def\CC{{\mathscr C}}
\def\CD{{\mathscr D}}
\def\CE{{\mathscr E}}
\def\CK{{\mathscr K}}
\def\CM{{\mathscr M}}
\def\CP{{\mathscr P}}
\def\CT{{\mathscr T}}
\def\CV{{\mathscr V}}
\newcommand*\bigcdot{\mathpalette\bigcdot@{.5}}
\newcommand*\bigcdot@[2]{\mathbin{\vcenter{\hbox{\scalebox{#2}{$\m@th#1\bullet$}}}}}
\newcommand{\twocong}[2][0.5]{\ar@{}[#2] \save ?(#1)*{\cong}\restore}
\newcommand{\twoeq}[2][0.5]{\ar@{}[#2] \save ?(#1)*{=}\restore}
\newcommand{\ltwocell}[3][0.5]{\ar@{}[#2] \ar@{=>}?(#1)+/r 0.2cm/;?(#1)+/l 0.2cm/^{#3}}
\newcommand{\rtwocell}[3][0.5]{\ar@{}[#2] \ar@{=>}?(#1)+/l 0.2cm/;?(#1)+/r 0.2cm/^{#3}}
\newcommand{\utwocell}[3][0.5]{\ar@{}[#2] \ar@{=>}?(#1)+/d  0.2cm/;?(#1)+/u 0.2cm/_{#3}}
\newcommand{\dtwocell}[3][0.5]{\ar@{}[#2] \ar@{=>}?(#1)+/u  0.2cm/;?(#1)+/d 0.2cm/^{#3}}
\newcommand{\ultwocell}[3][0.5]{\ar@{}[#2] \ar@{=>}?(#1)+/dr  0.2cm/;?(#1)+/ul 0.2cm/^{#3}}
\newcommand{\urtwocell}[3][0.5]{\ar@{}[#2] \ar@{=>}?(#1)+/dl  0.2cm/;?(#1)+/ur 0.2cm/^{#3}}
\newcommand{\dltwocell}[3][0.5]{\ar@{}[#2] \ar@{=>}?(#1)+/ur  0.2cm/;?(#1)+/dl 0.2cm/^{#3}}
\newcommand{\drtwocell}[3][0.5]{\ar@{}[#2] \ar@{=>}?(#1)+/ul  0.2cm/;?(#1)+/dr 0.2cm/^{#3}}
\begin{document}
\author{Ross Street \footnote{The author gratefully acknowledges the support of Australian Research Council Discovery Grant DP160101519.} \\ 
\small{Centre of Australian Category Theory} \\
\small{Macquarie University, NSW 2109 Australia} \\
\small{<ross.street@mq.edu.au>}
}
\title{Polynomials as spans}
\date{\small{\today}}
\maketitle

\noindent {\small{\emph{2010 Mathematics Subject Classification:} 18C15, 18D05}
\\
{\small{\emph{Key words and phrases:} span; partial map; powerful morphism; polynomial functor; exponentiable morphism; calibrated bicategory; right lifting.}}

\begin{abstract}
\noindent The paper defines polynomials in a bicategory $\mathscr{M}$. Polynomials in bicategories $\mathrm{Spn}\mathscr{C}$ of spans in a finitely complete category $\mathscr{C}$ agree with polynomials in $\mathscr{C}$ as defined by Nicola Gambino and Joachim Kock, and by Mark Weber. When $\mathscr{M}$ is \textit{calibrated}, we obtain another bicategory $\mathrm{Poly}\mathscr{M}$. We see that polynomials in $\mathscr{M}$ have representations as pseudofunctors $\mathscr{M}^{\mathrm{op}}\to \mathrm{Cat}$.
Using {\em tabulations}, we produce calibrations for the bicategory of relations in a regular category and for the bicategory of two-sided modules (distributors) between categories thereby providing new examples of bicategories of ``polynomials''.   
\end{abstract}

\tableofcontents

\section*{Introduction}

Polynomials in an internally complete (= ``locally cartesian closed'') category $\CE$
were shown by Gambino-Kock \cite{GambinoKock} to be the morphisms of a bicategory.
Weber \cite{Weber2015} defined polynomials in any category $\CC$ with pullbacks and proved they formed a bicategory. 
While both these papers are quite beautiful and accomplish further advances,
I felt the need to better understand the composition of polynomials. 
Perhaps what I have produced is merely a treatment of polynomials for bicategory theorists. 

The starting point was to view polynomials as spans of spans so that composition
could be viewed as the more familiar composition of spans using pullbacks; 
see B\'enabou \cite{Ben1967}. A polynomial from $X$ to $Y$ in a category $\CC$ 
is a diagram of the shape $X\xla{m_1} E\xra{m_2} S\xra{p} Y$ with $m_2$ 
a powerful (= exponentiable) morphisms in $\CC$. Such diagrams can be
thought of as generalizing spans: a span $X\xra{(m_1,S,p)}Y$ amounts to the
case where $E=S$ and $m_2$ is the identity. Our simple idea was to make the
diagram more complicated by including an identity thus:
\begin{eqnarray*}
X\xla{m_1} E\xra{m_2} S \xla{1_S} S \xra{p} Y \ ,
\end{eqnarray*}
resulting in a span 
\begin{eqnarray*}
X\xla{(m_1, E, m_2)} S \xra{(1_S, S, p)} Y
\end{eqnarray*}
of spans from $X$ to $Y$. 

Of course, the bicategory of spans does not have all bicategorical pullbacks.
Fortunately, polynomials are not general spans and sufficient pullbacks can be constructed.
Indeed, that is what Weber's distributivity pullbacks around a pair of composable
morphisms in $\CC$ construct. That construction requires the use of 
powerful morphisms in $\CC$. Here we define a morphism in
a bicategory to be a {\em right lifter} when every morphism into its codomain has
a right lifting through it. For spans in $\CC$ to be right lifters, one leg must be powerful. 

We introduce the term {\em calibration} for a class of morphisms, called {\em neat}, in a bicategory; the technical use of this word comes from B\'enabou \cite{Ben1975} who used it for categories.
A bicategory with a distinguished calibration is called {\em calibrated}. 
Polynomials in a calibrated bicategory $\CM$ are spans with one leg a right lifter and the other leg neat.
This suffices for the construction of a tricategory \cite{47} of polynomials in $\CM$ in which all the
3-morphisms are invertible. However, for two reasons, we decided to centre attention here on the 
bicategory $\mathrm{Poly}\CM$ obtained by taking isomorphism classes of 2-morphisms.
One reason is that it covers our present examples, the other is the possibility of iterating the
construction without moving to higher level categories. 

A {\em polynomic bicategory} $\CM$ is one in which the neat morphisms are all the groupoid fibrations (see Section~\ref{dfib}) in $\CM$. 
We prove that $\mathrm{Spn}\CC$ is polynomic for any finitely complete $\CC$.
In this case the polynomials are the polynomials in $\CC$ in the sense of Weber \cite{Weber2015}. 

The bicategory $\mathrm{Rel}\CE$ of relations in a regular category $\CE$ is calibrated by morphisms which are isomorphic to graphs of monomorphisms in $\CE$. 
In Example~\ref{toposex} for $\CE$ a topos, we give a reinterpretation of the bicategory of polynomials in 
$\mathrm{Rel}\CE$ as a Kleisli construction. 

By providing a calibration for the bicategory $\mathrm{Mod}$ of two-sided modules between categories, we obtain another example. Again, in Example~\ref{Modex} , we give a reinterpretation of the bicategory of polynomials in $\mathrm{Mod}$ as a Kleisli construction. 

It must be pointed out that the meaning of polynomial in a bicategory is different from the meaning in Section 4 of Weber \cite{Weber2015} which is about polynomials in 2-categories. 
Weber is dealing with the 2-category as a $\mathrm{Cat}$-enriched category, taking the polynomials to be diagrams of the same shape as in the case of ordinary categories, and accommodating the presence of 2-cells. 
In particular, if a category is regarded as a 2-category with only identity 2-cells,
then his polynomials in the 2-category are just polynomials in the category.
To define a polynomial, in the sense of this paper, in such a 2-category would require the specification of a calibration on the category and then a polynomial would reduce to a single morphism (called ``neat'') in that calibration.   
 
I am grateful to the Australian Category Seminar, especially Yuki Maehara,
Richard Garner, Michael Batanin and Charles Walker, for comments during and following my talks on this topic.
I am also particularly grateful to the diligent and insightful referee for suggesting important improvements,
mainly that I should add the detail to the previously vaguely expressed 
Examples~\ref{toposex} and \ref{Modex}; there are categorical facts involved that may not be so well known.

\section{Bipullbacks and cotensors}

Recall that the pseudopullback (also called iso-comma category) of two functors $\CC \xra{F}\CE \xla{P} \CD$ is the category $F/_{\mathrm{ps}}P$ whose objects $(C,\alpha, D)$ consist of objects $C\in \CC$ and $D\in \CD$ with $\alpha : FC\xra{\cong}PD$,
and whose morphisms $(u,v) : (C,\alpha, D)\to (C',\alpha', D')$ consist of morphisms
$u : C\to C'$ in $\CC$ and $v : D\to D'$ in $\CD$ such that $(Pv)\alpha = \alpha'(Fu)$. We have a universal square of functors
 \begin{equation}\label{pspb}
 \begin{aligned}
\xymatrix{
F/_{\mathrm{ps}}P \ar[d]_{\mathrm{cod}}^(0.5){\phantom{AAAA}}="1" \ar[rr]^{\mathrm{dom}}  && \CD \ar[d]^{F}_(0.5){\phantom{AAAA}}="2" \ar@{<=}"1";"2"^-{\xi}_-{\cong}
\\
\CC \ar[rr]_-{P} && \CE 
}
 \end{aligned}
\end{equation} 
containing an invertible natural transformation $\xi$. 

A square
 \begin{equation}\label{bipb}
 \begin{aligned}
\xymatrix{
P \ar[d]_{c}^(0.5){\phantom{aaaa}}="1" \ar[rr]^{d}  && A \ar[d]^{n}_(0.5){\phantom{aaaa}}="2" \ar@{<=}"1";"2"^-{\theta}_-{\cong}
\\
B \ar[rr]_-{p} && C 
}
 \end{aligned}
\end{equation} 
in a bicategory $\CA$ is a {\em bipullback} of the cospan $A\xra{n}C\xra{p}B$
when, for all objects $K$ of $\CA$, the induced functor
$$\CA(K,P)\lra \CA(K,n)/_{\mathrm{ps}}\CA(K,p) \ , \  u \mapsto (du,\theta u, cu) \ ,$$ 
is an equivalence of categories.

In a bicategory $\CA$, we write $A^{\mathbf{2}}$ for the (bicategorical) cotensor
(or power) of $A$ with the ordinal $\mathbf{2}$; this means that the category $\CA(K,A^{\mathbf{2}})$ is equivalent to the arrow category of $\CA(K,A)$,
pseudonaturally in $K\in \CA$. The identity morphism in 
$\CA(A^{\mathbf{2}},A^{\mathbf{2}})$ corresponds to a morphism (arrow)
$$
 \xymatrix{
 A^{\mathbf{2}}   \ar@/_1pc/[rr]_{\mathrm{c}}  \ar@/^1pc/[rr]^{\mathrm{d}} & \Downarrow_{^\lambda} & A 
   } 
 $$ 
 in $\CA(A^{\mathbf{2}},A)$.   
 
 \begin{example}
 For $\CA = \CV\text{-}\mathrm{Cat}$ in the sense of \cite{KellyBook}, the $\CV$-category $A^{\mathbf{2}}$ is the usual arrow $\CV$-category.  
 \end{example}
 
 \begin{example}\label{powersinMod}
 Recall (from \cite{22} for instance) the definition of the bicategory $\CV\text{-}\mathrm{Mod}$ of $\CV$-categories and their modules for a nice symmetric closed monoidal base category $\CV$. The objects are $\CV$-categories. The homcategories are defined to be the
 $\CV$-functor categories 
 \begin{eqnarray*}
\CV\text{-}\mathrm{Mod}(A,B)= [B^{\mathrm{op}}\ox A, \CV]
\end{eqnarray*}
whose objects $m : B^{\mathrm{op}}\ox A\to \CV$ are called modules from $A$ to $B$. Composition is defined by the coends
$(n\circ m)(c,a)=\int^b{m(b,a)\ox n(c,b)}$.
Let $I_*\mathbf{2}$ denote the free $\CV$-category on the category $\mathbf{2}$. 
The cotensor of the $\CV$-category $A$ with the ordinal $\mathbf{2}$ in the bicategory $\CV\text{-}\mathrm{Mod}$ is the $\CV$-category $(I_*\mathbf{2})^{\mathrm{op}}\ox A$.
This is because of the calculation  
 \begin{align*}
\CV\text{-}\mathrm{Mod}(K,(I_*\mathbf{2})^{\mathrm{op}}\ox A) & = [I_*\mathbf{2}\ox A^{\mathrm{op}}\ox K, \CV] \\
& \cong [A^{\mathrm{op}}\ox K, \CV^\mathbf{2}] \\ 
& \cong [A^{\mathrm{op}}\ox K, \CV]^{\mathbf{2}} \\
& \cong \CV\text{-}\mathrm{Mod}(K,A)^{\mathbf{2}} \ .
\end{align*}
Let $\partial_0 :\mathbf{1}\to \mathbf{2}$ be the functor $0\mapsto 1$; 
it is right adjoint to $ ! : \mathbf{2}\to \mathbf{1}$. It follows that $\mathrm{c} : A^{\mathbf{2}} \to A$ in $\CV\text{-}\mathrm{Mod}$
is $((I_*!)^{\mathrm{op}}\ox A)_* : (I_*\mathbf{2})^{\mathrm{op}}\ox A \to A$.
In particular, when $\CV = \mathrm{Set}$, $\mathrm{c}$ is the module $\mathrm{pr}_{2 \ *}$ induced by
the second projection functor $\mathbf{2}^{\mathrm{op}}\times A \to A$.     
  \end{example}

\begin{remark}\label{lowerstaronbicolimits}
The phenomenon described in Example~\ref{powersinMod} has to do with the fact that the pseudofunctor $(-)_* : \CV\text{-}\mathrm{Cat}\to \CV\text{-}\mathrm{Mod}$,
taking each $\CV$-functor $f : A\to B$ to the module $f_*:A\to B$ with $f_*(b,a) = B(b,fa)$, preserves bicolimits and $\CV\text{-}\mathrm{Mod}$ is self dual.
\end{remark}

\section{Groupoid fibrations}\label{dfib}

Let $p : E \to B$ be a functor. A morphism $\chi : e' \to e$ in $E$ is called {\em cartesian}\footnote{Classically called ``strong cartesian''} for $p$ when the square \eqref{cart} is a pullback for all $k\in E$.
\begin{equation}\label{cart}
\begin{aligned}
\xymatrix{
E(k,e') \ar[rr]^-{E(k,\chi)} \ar[d]_-{p} && E(k,e) \ar[d]^-{p} \\
B(pk,pe') \ar[rr]_-{B(pk,p\chi)} && B(pk,pe)}
\end{aligned}
\end{equation}
Note that all invertible morphisms in $E$ are cartesian.
If $p$ is fully faithful then all morphisms of $E$ are cartesian.

We call the functor $p : E \to B$ a {\em groupoid fibration} when 
\begin{itemize}
\item[(i)] for all objects $e\in E$
and morphisms $\beta : b \to pe$ in $B$, there exist a morphism $\chi : e'\to e$ in $E$
and isomorphism $b\cong pe'$ whose composite with $p\chi$ is $\beta$, and
\item[(ii)] every morphism of $E$ is cartesian for $p$.  
\end{itemize}
From the pullback \eqref{cart}, it follows that groupoid fibrations are conservative (that is, reflect invertibility).  

We call the functor $p : E \to B$ an {\em equivalence relation fibration} or {\em er-fibration} when it is a groupoid fibration and the only endomorphisms $\xi : x \to x$ in $E$ which map to identities under $p$ are identities.
It follows (using condition (ii)) that $p$ is faithful.
Note that if $p$ is an equivalence then it is an er-fibration.

Write $\mathrm{GFib}B$ for the 2-category whose objects are groupoid fibrations $p : E\to B$, and whose hom categories are given by the following pseudopullbacks.
 \begin{equation}\label{GFibhom}
 \begin{aligned}
\xymatrix{
\mathrm{GFib}B(p,q) \ar[d]_{}^(0.5){\phantom{AAAAAA}}="1" \ar[rr]^{}  && [E,F] \ar[d]^{[E,q]}_(0.5){\phantom{AAAAAA}}="2" \ar@{<=}"1";"2"^-{\cong}_-{}
\\
\mathbf{1} \ar[rr]_-{\lceil p\rceil} && [E,B] 
}
 \end{aligned}
\end{equation}
So objects of $\mathrm{GFib}B(p,q)$ are pairs $(f,\phi)$ where $f : E \to F$
is a functor and $\phi : q f \Ra p$ is an invertible natural transformation.
If $\phi$ is an identity then $(f,\phi)$ is called {\em strict}.  

Let $\mathrm{Gpd}$ be the 2-category of groupoids, functors and natural transformations. 
Write $\mathrm{Hom}(B^{\mathrm{op}},\mathrm{Gpd})$ for the 2-category of pseudofunctors (= homomorphisms of bicategories \cite{Ben1967}) $T :  B^{\mathrm{op}}\to \mathrm{Gpd}$, pseudo-natural transformations, and modifications \cite{7}.

Recall that the Grothendieck construction $\mathrm{pr} : \wr T \to B$
on a pseudofunctor $T :  B^{\mathrm{op}}\to \mathrm{Gpd}$ is the projection functor from
the category $\wr T$ whose objects are pairs $(t,b)$ with $b\in B$
and $t\in Tb$, and whose morphisms $(\tau,\beta) : (t,b)\to (t',b')$ consist of
morphisms $\beta : b\to b'$ in $B$ and $\tau : t\to (T\beta)t'$ in $Tb$. 
This construction is the object assignment for a 2-functor
\begin{eqnarray}\label{El}
\wr  :  \mathrm{Hom}(B^{\mathrm{op}},\mathrm{Gpd}) \lra \mathrm{GFib}B
\end{eqnarray}
which actually lands in the sub-2-category of strict morphisms.
Note that the pullback
\begin{equation*}
\xymatrix{
Tb \ar[rr]^-{} \ar[d]_-{} && \wr T \ar[d]^-{\mathrm{pr}} \\
\mathbf{1} \ar[rr]_-{\lceil b \rceil} && B}
\end{equation*}
is also a bipullback (see \cite{42}); this suggests that we can reconstruct
a pseudofunctor $T$ from a groupoid fibration $p : E\to B$ by defining
$Tb$ to be the pseudopullback of $\lceil b \rceil : \mathbf{1}\to B$ and $p$.
\begin{proposition}
The 2-functor \eqref{El} is a biequivalence.
\end{proposition}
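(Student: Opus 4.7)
The plan is to exhibit a pseudo-inverse $(\widetilde{-})$ to $\wr$ by formalising the reconstruction suggested just before the proposition: for a groupoid fibration $p : E\to B$, let $\tilde{p}(b)$ be the pseudopullback of $\lceil b\rceil : \mathbf{1}\to B$ along $p$. Its objects are pairs $(e,\alpha : pe\cong b)$, and a morphism $(e,\alpha)\to (e',\alpha')$ is a $\chi : e\to e'$ with $\alpha'\circ p\chi=\alpha$. Since $p\chi$ is thereby forced to be invertible and $p$ is conservative (as remarked after condition (ii)), every such $\chi$ is invertible, so $\tilde{p}(b)$ is a groupoid. For $\beta : b'\to b$ in $B$, condition (i) provides, for each $(e,\alpha)$, a cartesian lift of $\alpha^{-1}\circ\beta$ together with an isomorphism $pe''\cong b'$, yielding a functor $\tilde{p}(\beta) : \tilde{p}(b)\to\tilde{p}(b')$. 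Condition (ii) supplies uniqueness up to canonical isomorphism and gives the comparisons making $\tilde{p}$ into a pseudofunctor; the assignment $p\mapsto\tilde{p}$ then extends to a 2-functor $\mathrm{GFib}B\to\mathrm{Hom}(B^{\mathrm{op}},\mathrm{Gpd})$.

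For essential surjectivity of $\wr$ on objects, the canonical comparison $\wr\tilde{p}\to E$ sending $((e,\alpha),b)$ to $e$ is a strict morphism of groupoid fibrations over $B$; it is essentially surjective on objects by (i) and fully faithful because the fibres over $B$ are precisely the pseudopullbacks defining $\tilde{p}(b)$, so it is an equivalence in the sense of \eqref{GFibhom}. Conversely, the square displayed immediately before the proposition exhibits $Tb$ as both a strict pullback and a bipullback, so $\widetilde{\wr T}(b)\simeq Tb$ pseudonaturally in $b$. For the local equivalence, given pseudofunctors $S,T$, we show the induced functor $\mathrm{Hom}(B^{\mathrm{op}},\mathrm{Gpd})(S,T)\to\mathrm{GFib}B(\wr S,\wr T)$ is an equivalence: an object $(f,\phi)$ with $\phi : \mathrm{pr}_T\circ f\cong \mathrm{pr}_S$ transports, via $\phi^{-1}$ and the pseudopullback description of fibres, into a pseudonatural family of functors $Sb\to Tb$ whose image under $\wr$ is isomorphic to $(f,\phi)$; fullness and faithfulness on 2-cells follow because modifications correspond bijectively to natural isomorphisms between strict morphisms of fibrations that are compatible with the specified $\phi$'s.

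The main obstacle is the coherence bookkeeping in the first step: condition (i) only asserts existence of a cartesian lift together with an accompanying isomorphism, so different choices produce a priori different functors $\tilde{p}(\beta)$ and $\tilde{p}(\beta'\beta)$, and one must verify that the uniqueness-up-to-canonical-isomorphism of cartesian lifts (forced by (ii) together with conservativity) assembles into the associativity and unit constraints of a pseudofunctor. Once these coherences are in hand, the remaining verifications are largely formal and parallel the standard Grothendieck correspondence, with the simplifications that every morphism of $E$ is automatically cartesian and that the fibres are groupoids rather than merely categories.
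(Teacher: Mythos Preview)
The paper does not actually supply a proof of this proposition: it is stated without proof, preceded only by the remark that the strict pullback displaying $Tb$ as a fibre of $\wr T$ is also a bipullback, and the suggestion that one can reconstruct $T$ from a groupoid fibration $p$ by taking $Tb$ to be the pseudopullback of $\lceil b\rceil$ along $p$. Your proposal follows precisely this suggestion and fleshes out the standard Grothendieck-correspondence argument in the groupoid-fibration setting; the sketch is correct, and the coherence bookkeeping you flag is indeed the only nontrivial content. So there is nothing to compare against beyond the one-line hint, which you have faithfully expanded.
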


A category which is both a groupoid and a preorder is the same as an equivalence relation; that is, a set of objects equipped with an equivalence relation thereon.
Let $\mathrm{ER}$ be the 2-category of equivalence relations, functors and natural transformations. 
Note that the 2-functor $\mathrm{Set} \to \mathrm{ER}$ taking each set to the identity relation is a biequivalence. 
Write $\mathrm{ERFib}B$ for the full sub-2-category of $\mathrm{GFib}B$
with objects the er-fibrations. 
\begin{proposition}\label{Eler}
The biequivalence \eqref{El} restricts to a biequivalence
\begin{eqnarray*}
\wr  :  \mathrm{Hom}(B^{\mathrm{op}},\mathrm{ER}) \xra{\sim} \mathrm{ERFib}B \ ,
\end{eqnarray*}
and so further restricts to a biequivalence
\begin{eqnarray*}
[B^{\mathrm{op}},\mathrm{Set}] \xra{\sim} \mathrm{ERFib}B \ .
\end{eqnarray*}
\end{proposition}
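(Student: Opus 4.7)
The plan is to restrict the biequivalence $\wr : \mathrm{Hom}(B^{\mathrm{op}},\mathrm{Gpd}) \to \mathrm{GFib}B$ of the previous proposition. Since $\mathrm{ER}$ is a full sub-2-category of $\mathrm{Gpd}$ and $\mathrm{ERFib}B$ is defined to be a full sub-2-category of $\mathrm{GFib}B$, the hom-equivalences for $\wr$ restrict automatically to the relevant sub-2-categories. Hence I only have to check that $\wr$ matches objects between the two sides: $\wr T$ is an er-fibration precisely when every value $Tb$ is an equivalence relation.

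For the forward implication, suppose $T$ factors through $\mathrm{ER}$. The previous proposition already gives that $\wr T$ is a groupoid fibration; for the extra er-fibration condition I would take an endomorphism $(\tau,\beta) : (t,b) \to (t,b)$ in $\wr T$ whose image under $\mathrm{pr}$ is $1_b$, observe that this forces $\beta = 1_b$, and then $\tau$ is an endomorphism in the equivalence relation $Tb$ and must be an identity. For the converse, given an er-fibration $p : E \to B$, reconstruct $T$ by setting $Tb$ to be the pseudopullback of $\lceil b\rceil$ along $p$ as indicated just before the previous proposition. Then $Tb$ is a groupoid, because any morphism there is cartesian for $p$ and lies over an isomorphism in $B$, and groupoid fibrations are conservative; it is also a preorder, because parallel arrows in $Tb$ are forced by the pseudopullback compatibility to have equal image under $p$, and $p$ is faithful by the remark just after the definition of er-fibration. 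Thus $Tb$ is an equivalence relation.

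For the second biequivalence, the 2-functor $\mathrm{Set} \to \mathrm{ER}$ sending each set to the identity equivalence relation is a biequivalence by the remark preceding the statement, so post-composition yields a biequivalence $\mathrm{Hom}(B^{\mathrm{op}},\mathrm{Set}) \simeq \mathrm{Hom}(B^{\mathrm{op}},\mathrm{ER})$. Since $\mathrm{Set}$ is locally discrete, every pseudofunctor $B^{\mathrm{op}} \to \mathrm{Set}$ is strict and the inclusion $[B^{\mathrm{op}},\mathrm{Set}] \hookrightarrow \mathrm{Hom}(B^{\mathrm{op}},\mathrm{Set})$ is a biequivalence; composing with the restricted $\wr$ from the first part delivers the claim. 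The main obstacle I anticipate is the converse half of the first step, since the er-fibration axiom mentions only endomorphisms over identities, so promoting this to the preorder condition on fibers requires the detour through faithfulness of $p$ rather than a direct application of the axiom.
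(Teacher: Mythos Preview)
Your proof is correct. The paper states this proposition without proof, treating it as a routine restriction of the preceding biequivalence; your argument supplies precisely the details one would expect.

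One minor imprecision worth noting: in the forward direction you call $\tau$ ``an endomorphism in $Tb$'', but for a genuine pseudofunctor $T$ the morphism $\tau : t \to (T1_b)t$ need not have codomain $t$. This does not affect the argument: the identity of $(t,b)$ in $\wr T$ is $(\eta_t,1_b)$ where $\eta_t : t \to (T1_b)t$ is the unit constraint, and since $Tb$ is a preorder the parallel morphisms $\tau$ and $\eta_t$ must coincide, so $(\tau,1_b)$ is indeed the identity. Your converse argument and the deduction of the second biequivalence are sound as written.
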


Let $\CE$ and $\CB$ be bicategories. Baklovi\'c \cite{Bak2011} and Buckley \cite{Buckley2014} say that a morphism $x : Z\to X$ in $\CE$ is {\em cartesian} for a pseudofunctor $P : \CE \to \CB$ when the following square is a bipullback in $\mathrm{Cat}$ for all objects $K$ of $\CE$.
\begin{equation}\label{cartmor}
 \begin{aligned}
\xymatrix{
\CE(K,Z) \ar[d]_{P}^(0.5){\phantom{aaaaaaaa}}="1" \ar[rr]^{\CE(K,x)}  && \CE(K,X) \ar[d]^{P}_(0.5){\phantom{aaaaaaaa}}="2" \ar@{<=}"1";"2"^-{\cong}_-{}
\\
\CB(PK,PZ) \ar[rr]_-{\CB(PK,Px)} && \CB(PK,PX) 
}
 \end{aligned}
\end{equation} 
A 2-cell $\sigma : x' \Ra x : Z \to X$ in $\CE$ is called {\em cartesian} for $P$
when it is cartesian (as a morphism of $\CE(Z,X)$) for the functor $P : \CE(Z,X) \to \CB(PZ,PX)$.  
Note that all equivalences are cartesian morphisms and all invertible 2-cells are cartesian.
\begin{definition}\label{defgfib}
A pseudofunctor $P : \CE \to \CB$ is a {\em groupoid fibration} when 
\begin{itemize}
\item[(i)] for all $X\in \CE$ and $f : B\to PX$ in $\CB$, there exist a morphism $x : Z\to X$ in $\CE$ and an equivalence $B\simeq PZ$ whose composite with $Px$ is isomorphic to $f$,
\item[(ii)] every morphism of $\CE$ is cartesian for $P$, and 
\item[(iii)] every 2-cell of $\CE$ is cartesian for $P$. 
\end{itemize}
A morphism $p : E \to B$ in a tricategory $\CT$ is called a {\em groupoid fibration}
when, for all objects $K$ of $\CT$, the pseudofunctor $\CT(K,p) : \CT(K,E)\to \CT(K,B)$ is a groupoid fibration between bicategories.
\end{definition}
\begin{definition}\label{defconserv}
A pseudofunctor $F : \CA \to \CB$ is called {\em conservative} when
\begin{itemize}
\item[(a)] if $Ff$ is an equivalence in $\CB$ for a morphism $f$ in $\CA$
then $f$ is an equivalence; 
\item[(b)] if $F\alpha$ is an isomorphism in $\CB$ for a 2-cell $\alpha$ in $\CA$
then $\alpha$ is an isomorphism.
\end{itemize}
A morphism $f : A \to B$ in a tricategory $\CT$ is {\em conservative}
when, for all objects $K$ of $\CT$, the pseudofunctor $\CT(K,f) : \CT(K,A)\to \CT(K,B)$ is conservative.   
\end{definition}
\begin{proposition}\label{gfibsconserve}
Groupoid fibrations are conservative.
\end{proposition}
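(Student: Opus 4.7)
Both parts of Definition~\ref{defconserv} should be handled separately, beginning with the 2-cell case (b) since it reduces directly to the 1-categorical pullback lemma already used to derive ordinary conservativity at the start of Section~\ref{dfib}.

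For (b), suppose $P\alpha$ is invertible in $\CB(PZ,PX)$ for some 2-cell $\alpha : x' \Rightarrow x$ in $\CE$. Axiom (iii) of Definition~\ref{defgfib} tells us $\alpha$ is cartesian for the ordinary functor $P : \CE(Z,X) \to \CB(PZ,PX)$, so the pullback square \eqref{cart} applies with $\chi$ replaced by $\alpha$. Instantiating it at $k = x$ makes the bottom row post-composition with the bijection $P\alpha$, and a pullback of a bijection is a bijection, producing a unique $\beta$ with $\alpha\beta = 1_x$. A second instantiation at $k = x'$ together with the uniqueness clause forces $\beta\alpha = 1_{x'}$, so $\alpha$ is invertible.

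For (a), suppose $Pf$ is an equivalence in $\CB$ for $f : A \to B$ in $\CE$. Axiom (ii) says \eqref{cartmor} is a bipullback in $\mathrm{Cat}$ for every $K$. Since composition with an equivalence is an equivalence of categories, the bottom edge $\CB(PK,Pf)$ is an equivalence. The plan is to deduce that the top edge $\CE(K,f)$ is also an equivalence for every $K$; once this is known, the bicategorical Yoneda lemma gives that $f$ itself is an equivalence in $\CE$, or more concretely essential surjectivity at $K = B$ yields $g$ with $fg \cong 1_B$ and full-faithfulness at $K = A$ promotes $fgf \cong f$ to $gf \cong 1_A$. The main (though still routine) obstacle is the passage from \emph{equivalence on one edge of a bipullback} to \emph{equivalence on the parallel edge}; this follows by feeding a pseudo-inverse of $\CB(PK,Pf)$ and the identity on $\CE(K,B)$ into the universal property of the bipullback to manufacture a right pseudo-inverse of $\CE(K,f)$, and then transferring the second triangle identity across the bipullback's fully-faithfulness. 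It is the only step where the bicategorical content of the cartesian axiom really has to be unpacked, but it is a standard bipullback manipulation.
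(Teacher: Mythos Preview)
Your argument is correct and follows the same route as the paper's proof: for (b) you invoke axiom (iii) and the pullback \eqref{cart} exactly as the paper does, and for (a) you use axiom (ii) to deduce from the bipullback \eqref{cartmor} that each $\CE(K,f)$ is an equivalence, then descend to $f$ itself. The only minor difference is that the paper handles the descent by choosing adjoint inverse equivalences so that they assemble pseudonaturally in $K$ and then invokes the bicategorical Yoneda Lemma, whereas you give the direct argument (take $K=B$ to produce $g$ with $fg\cong 1_B$, then $K=A$ and full faithfulness of $\CE(A,f)$ to obtain $gf\cong 1_A$); both are standard and equivalent in force.
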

\begin{proof}
If $Px$ is an equivalence, we see from the bipullback \eqref{cartmor} that each functor $\CE(K,x)$ is too. Since these equivalences can be chosen to be adjoint equivalences, they become pseudonatural in $K$ and so, by the bicategorical Yoneda Lemma \cite{14}, are represented by an inverse equivalence for $x$. This proves (a) in the Definition of conservative. Similarly, for (b), look at the pullback \eqref{cart} for the functor $p = (\CE(Z,X) \xra{P} \CB(PZ,PX))$. 
\end{proof}

 For a pseudofunctor $P : \CE \to \CB$ between bicategories, write $\CB/P$ for the bicategory whose objects are pairs $(B\xra{f}PE,E)$, where $E$ is an object of $\CE$ and $f:B\to PE$ is a morphism of $\CB$, and whose homcategories are defined by pseudopullbacks
 \begin{equation}\label{B/P}
 \begin{aligned}
   \xymatrix{\CB/P((f,E),(f',E')) \ar[dd]_{\mathrm{d}} \ar[rr]^{\mathrm{c}} & & \CE(E,E') \ar[d]^{P} \\
      &   &  \dtwocell{ll}{\cong} \CB(PE,PE') \ar[d]^{\CB(f,1)} \\
     \CB(B,B')\ar[rr]_{\CB(1,f')} & & \CB(B,PE')}
 \end{aligned}
\end{equation}
Write $\CE/\CE$ for $\CB/P$ in the case $P$ is the identity pseudofunctor of $\CE$.
There is a canonical pseudofunctor $J_P : \CE/\CE \to \CB/P$ taking 
the object $(X\xra{u}E,E)$ to $(PX\xra{Pu}PE,E)$. 

\begin{proposition}\label{propgfib}
The pseudofunctor $P : \CE \to \CB$ between bicategories satisfies condition (i) in the Definition~\ref{defgfib} of groupoid fibration if and only if $J_P : \CE/\CE \to \CB/P$ is surjective on objects up to equivalence. 
Also, $P : \CE \to \CB$ satisfies condition (ii) if and only if the effect of $J_P : \CE/\CE \to \CB/P$ on homcategories is an equivalence. 
Condition (iii) is automatic if all 2-cells in $\CE$ are invertible. 
\end{proposition}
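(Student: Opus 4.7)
The plan is to unpack each of the three conditions in Definition~\ref{defgfib} separately, using throughout the pseudopullback presentation~\eqref{B/P} of the hom-categories of $\CB/P$ (and, taking $P$ to be the identity, of $\CE/\CE$). First I would record that on hom-categories, $J_P$ sends a morphism $(g:E_1\to E_2,\, h:X_1\to X_2,\, \gamma:gu_1\cong u_2h)$ of $\CE/\CE$ to $(g,\, Ph,\, P\gamma)$ in $\CB/P$, with the analogous action on 2-cells; the three claims then decouple.

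For condition~(i), an object $(f:B\to PE, E)$ of $\CB/P$ is equivalent in $\CB/P$ to some image $J_P(Z\xra{u}E',E')=(Pu:PZ\to PE',E')$ exactly when there are equivalences $g:E'\xra{\sim}E$ in $\CE$ and $h:PZ\xra{\sim}B$ in $\CB$ together with an isomorphism $Pg\circ Pu\cong f\circ h$; then taking $X:=E$, $x:=gu:Z\to E$, and the equivalence $h^{-1}:B\xra{\sim}PZ$ is exactly what condition~(i) produces. Running the same computation in reverse -- starting from $x:Z\to X$ and an equivalence $e:B\xra{\sim}PZ$ supplied by (i), and taking $E':=X$, $g:=1_X$, $u:=x$, $h:=e^{-1}$ -- yields the required equivalence in $\CB/P$. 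Hence condition~(i) is equivalent to essential surjectivity of $J_P$ up to equivalence.

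For condition~(ii), both hom-categories $\CE/\CE((u_1,E_1),(u_2,E_2))$ and $\CB/P((Pu_1,E_1),(Pu_2,E_2))$ are pseudopullbacks sharing the left leg $\CE(E_1,E_2)$, with right legs $\CE(X_1,X_2)$ and $\CB(PX_1,PX_2)$ connected by $P$ and bases $\CE(X_1,E_2)$ and $\CB(PX_1,PE_2)$ likewise connected by $P$. By the pasting lemma for pseudopullbacks in $\mathrm{Cat}$, the induced $J_P$-functor between these pseudopullbacks is an equivalence for every choice of $u_1$ as soon as the square with vertical maps $u_{2*},(Pu_2)_*$ and horizontal maps $P$ is a bipullback for every $X_1$; conversely, specialising $E_1=X_1$ and $u_1=1_{X_1}$ collapses the $\CE/\CE$-pseudopullback to $\CE(X_1,X_2)$ and realises that same square as the $J_P$-map itself, so a local equivalence forces the square to be a bipullback. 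But this square is precisely~\eqref{cartmor} for $u_2:X_2\to E_2$ evaluated at $K=X_1$; demanding that it be a bipullback for all $X_1$ and all $u_2$ is exactly condition~(ii). The main technical obstacle is the pseudopullback pasting argument, which I would verify directly by tracing objects and morphisms through the two pseudopullback presentations and checking that a datum in the $\CB/P$-pseudopullback is isomorphic to a $J_P$-image iff its right-leg component factors through $P$ up to iso.

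Condition~(iii) is automatic when all 2-cells of $\CE$ are invertible: each hom-category $\CE(Z,X)$ is then a groupoid, so every 2-cell of $\CE$ is invertible in the category in which cartesianness is tested, and invertible morphisms of any category are cartesian for every functor out of it (as noted immediately after~\eqref{cart}).
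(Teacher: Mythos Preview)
The paper states this proposition without proof, so there is no argument to compare against; your proposal supplies the details the paper omits, and it is correct.

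Your treatment of condition~(i) is a clean unpacking of what an equivalence in $\CB/P$ amounts to. For condition~(ii), the pseudopullback-pasting argument is exactly the right organising principle: once one observes that both hom-categories are pseudopullbacks along $\CE(u_1,1)$ (after rewriting $\CB(Pu_1,1)\circ P\cong P\circ\CE(u_1,1)$ via pseudofunctoriality), the comparison functor $J_P$ on homs is the pullback of the comparison map $\CE(X_1,X_2)\to Q$ into the pseudopullback of $P$ and $\CB(1,Pu_2)$; the latter is an equivalence precisely when \eqref{cartmor} is a bipullback, and pulling back an equivalence yields an equivalence. Your specialisation $u_1=1_{X_1}$ to recover the converse is the standard trick and works as stated. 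The only point you leave as a sketch is the pasting law for bipullbacks in $\mathrm{Cat}$, but this is routine and well known. Condition~(iii) is immediate for the reason you give, and the paper's own remark after \eqref{cart} that invertible morphisms are always cartesian is the correct citation.
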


\begin{example}
Each biequivalence of bicategories is a groupoid fibration.
\end{example}
\begin{example}\label{domgfib}
Let $H$ be an object of the bicategory $\CB$.
Write $\CB/H$ for the bicategory $\CB/P$ where $P$ is the constant pseudofunctor
$\mathbf{1}\to \CB$ at $H$. The ``take the domain'' pseudofunctor 
\begin{eqnarray}\label{dom}
\mathrm{dom} :\CB/H \to \CB
\end{eqnarray}
is a groupoid fibration. 
For, it is straightforward to see that the canonical pseudo-functor 
$(\CB/H)/(\CB/H)\to \CB/\mathrm{dom}$ is a biequivalence,
so it remains to prove each 2-cell 
$$\sigma : (g,\psi) \Ra (f,\phi) : (A\xra{a}H)\to (B\xra{b}H)$$
in $\CB/H$ is cartesian for \eqref{flowerstar}.
The condition for a 2-cell is $(b\sigma)\psi = \phi$.
Take another 2-cell $\tau : (h,\theta) \Ra (f,\phi)$ in $\CB/H$
(so that $(b\tau)\theta = \phi$) and a 2-cell $\upsilon : h\Ra g$
in $\CB$ such that $\sigma \upsilon = \tau$. 
Then we have $(b\sigma)(b\upsilon)\theta = (b\sigma\upsilon)\theta = (b\tau)\theta 
= \phi = (b\sigma)\psi$ with $b\sigma$ invertible. So $(b\upsilon)\theta = \psi$.
We conclude that $\upsilon : (h,\theta) \Ra (g,\psi)$ is a 2-cell in $\CB/H$, as required.

Note that \eqref{dom} is not a local groupoid fibration in general; 
that is, the functor $\mathrm{dom}_{a,b} : \CB/H(A\xra{a}H,B\xra{b}H) \to \CB(A,B)$ is generally not a groupoid fibration.   
\end{example}
\begin{example}\label{amg}
Apparently more generally, let $f : H\to K$ be a morphism in the bicategory $\CB$.
Write 
\begin{eqnarray}\label{flowerstar}
f_* : \CB/H \to \CB/K
\end{eqnarray}
for the pseudofunctor which composes with $f$.
On applying Example~\ref{domgfib} with $\CB$ and $H$ replaced by $\CB/K$ and $H\xra{f}K$, up to biequivalence we obtain this example.     
\end{example}

\begin{proposition}\label{gfibterm}
Up to biequivalence, pseudofunctors of the form \eqref{dom} are precisely the groupoid fibrations $P : \CE \to \CB$ for which the domain bicategory has a terminal object. Moreover, if such a $P$ has a left biadjoint, it is a biequivalence.   
\end{proposition}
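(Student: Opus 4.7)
The plan is to prove both directions of the characterization and then handle the moreover clause. The easy direction is that each $\mathrm{dom} : \CB/H \to \CB$ is a groupoid fibration by Example~\ref{domgfib}, and the object $(H \xra{1_H} H)$ is terminal in $\CB/H$: unpacking the pseudopullback \eqref{B/P} with target $(H, 1_H)$, a morphism $(B, f) \to (H, 1_H)$ is a pair $(u, \phi : f \cong u)$, any two of which are uniquely isomorphic, so the hom-category is equivalent to $\mathbf{1}$.

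For the converse, given a groupoid fibration $P : \CE \to \CB$ with terminal $T \in \CE$, set $H = PT$ and define $F : \CE \to \CB/H$ on objects by $F(X) = (PX \xra{P!_X} H)$, where $!_X : X \to T$ is the essentially-unique morphism. Pseudofunctoriality is inherited from $P$ together with the terminality of $T$, and the triangle $\mathrm{dom} \circ F \simeq P$ is definitional. Essential surjectivity follows from Definition~\ref{defgfib}(i) applied to a given $f : B \to H$: the resulting $x : Z \to T$ with $Px \cong f$ (after the equivalence $B \simeq PZ$) must itself be equivalent to $!_Z$ by terminality, so $F(Z) \simeq (B, f)$ in $\CB/H$. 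For local equivalence on $\CE(X, Y)$, I specialize the cartesian bipullback \eqref{cartmor} to $x = !_Y : Y \to T$ with test object $K = X$, obtaining
\begin{equation*}
\CE(X, Y) \simeq \CE(X, T) \times^{\mathrm{ps}}_{\CB(PX, H)} \CB(PX, PY).
\end{equation*}
Since $\CE(X, T) \simeq \mathbf{1}$ picks out $!_X$ and the right-hand leg sends $g$ to $P!_Y \circ g$, the pseudopullback reduces to pairs $(g, \phi : P!_X \cong P!_Y g)$, which is exactly $\CB/H(FX, FY)$ as presented in \eqref{B/P}.

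For the moreover clause, assume $P \simeq \mathrm{dom} : \CB/H \to \CB$ has a left biadjoint. Then $\mathrm{dom}$ is a right biadjoint, and hence preserves bilimits; in particular, it sends the terminal object $(H, 1_H)$ of $\CB/H$ to the terminal object of $\CB$, so $H$ is itself terminal. Consequently, every $B \in \CB$ carries an essentially unique morphism $!_B : B \to H$, and the assignment $B \mapsto (B, !_B)$ extends to a pseudofunctor $\CB \to \CB/H$ which is visibly pseudo-inverse to $\mathrm{dom}$.

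The main technical obstacle is the coherence bookkeeping in the second paragraph: one must verify that $F$ is a genuine pseudofunctor and that the canonical comparison between the bipullback \eqref{cartmor} and the pseudopullback defining $\CB/H(FX, FY)$ in \eqref{B/P} correctly aligns the invertible 2-cells coming from terminality of $T$. All other assertions then follow formally from standard properties of right biadjoints and terminal objects.
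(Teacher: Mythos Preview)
Your proposal is correct. You construct exactly the same pseudofunctor $\hat{J}_P : \CE \to \CB/PT$, $E \mapsto (PE\xra{P!_E}PT)$, that the paper does, and your treatment of the moreover clause (right biadjoints preserve terminals, hence $H$ is terminal in $\CB$, hence $\mathrm{dom}$ is a biequivalence) is identical to the paper's.

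The one genuine difference is in how the biequivalence of $\hat{J}_P$ is established. The paper does not verify essential surjectivity and local equivalence directly; instead it invokes Proposition~\ref{propgfib}, which says that $J_P : \CE/\CE \to \CB/P$ is a biequivalence whenever $P$ is a groupoid fibration, and then observes that $\hat{J}_P$ arises as the tripullback of $J_P$ along the canonical inclusion $\CB/PT \hookrightarrow \CB/P$, so inherits being a biequivalence. Your route is more elementary: you bypass $J_P$ entirely and read off local equivalence by specializing the cartesian bipullback \eqref{cartmor} at $x = {!}_Y$ and using $\CE(X,T)\simeq \mathbf{1}$. This has the advantage of being self-contained and of making the match with the pseudopullback \eqref{B/P} explicit; the paper's version has the advantage of packaging the coherence bookkeeping you flag in your last paragraph into the single statement that biequivalences are stable under tripullback.
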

\begin{proof}
Let $T$ be a terminal object of $\CE$. 
Make a choice of morphism $!_E : E \to T$ for each object $E$ of $\CE$. 
Then the pseudofunctor 
$$\hat{J}_P : \CE \lra \CB/PT \ , \ E \mapsto (PE\xra{P!_E}PT)$$
is a biequivalence over $\CB$; it is a tripullback of the biequivalence $J_P$ 
along the canonical $\CB/PT \to \CB/P$. 
So $P$ is biequivalent to \eqref{dom} with $H=PT$. 

For the second sentence, if we suppose the $\mathrm{dom}$ of \eqref{dom} has a left biadjoint then it preserves terminal objects. The bicategory $\CB/H$ has the terminal object $1_H : H \to H$. So $H = \mathrm{dom}(H\xra{1_H}H)$ is terminal in $\CB$.
So $\mathrm{dom}$ is a biequivalence.    
\end{proof}

\section{Spans in a bicategory}\label{siabc}

Spans in a bicategory $\CA$ with bipullbacks (= iso-comma objects) will be recalled;
compare \cite{14} Section 3.  

A {\em span} from $X$ to $Y$  in the bicategory $\CA$ is a diagram $X\xla{u}S\xra{p}Y$; we write $(u,S,p) : X\to Y$.
 The composite of $X\xla{u}S\xra{p}Y$ and $Y\xla{v}T\xra{q}Z$ is obtained from the diagram
 \begin{eqnarray}\label{compspans}
 \begin{aligned}
\xymatrix{
& & P \ar[ld]_-{\mathrm{pr}_1} \ar[rd]^-{\mathrm{pr}_2} & & \\
& S\ar @{} [rr] | {\stackrel{\cong} \Longleftarrow} \ar[ld]_-{u} \ar[rd]^-{p} & & T \ar[ld]_-{v} \ar[rd]^-{q} & \\
X & & Y & & Z}
 \end{aligned}
\end{eqnarray}
 (where $P$ is the bipullback of $p$ and $v$) as the span $X\xla{u\mathrm{pr}_1}P\xra{q\mathrm{pr}_2}Z$.
 A {\em morphism $(\lambda, h, \rho) : (u,S,p)\to (u',S',p')$ of spans} is a morphism $h : S\to S'$ in $\CM$
equipped with invertible 2-cells as shown in the two triangles below.
  \begin{eqnarray}\label{spnmorph}
\begin{aligned}
\xymatrix{
& & S \ar[d]|-{ h} \ar[lld]_-{u} \ar[rrd]^-{ p} \ar @{} [ld] | {\stackrel{\lambda\cong} \Leftarrow} 
\ar @{} [rd] | {\stackrel{\rho\cong} \Leftarrow} & &
\\
X   && S' \ar[ll]^-{u'} \ar[rr]_-{p'} & & Y  }
\end{aligned}
\end{eqnarray}
A 2-cell $\sigma : h\Ra k : (u,S,p)\to (u',S',p')$ between such morphisms is a 2-cell 
$\sigma : h \Ra k : S \to S'$ in $\CM$ which is compatible with the 2-cells
in the triangles in the sense that $\lambda = \lambda'.u'\sigma$ and $\rho'=p'\sigma.\rho$.
We write $\mathrm{Spn}\CA(X,Y)$ for the bicategory of spans from $X$ to $Y$.  
 
 Composition pseudofunctors
 $$\mathrm{Spn}\CA(Y,Z)\times \mathrm{Spn}\CA(X,Y)\lra \mathrm{Spn}\CA(X,Z)$$
 are defined on objects by composition of spans \eqref{compspans} and on morphisms by using the universal properties of bipullback.
 
 In this way, we obtain a tricategory $\mathrm{Spn}\CA$. The associator equivalences are obtained using the horizontal and vertical stacking properties of
 pseudopullbacks. The identity span on $X$ has the form $(1_X,X,1_X)$ and
 the unitor equivalences are obtained using the fact that a pseudopullback of
 the cospan $X\xra{f}Y\xla{1_Y}Y$ is given by the span $X\xla{1_X}X\xra{f}Y$ equipped with the canonical isomorphism $1_Yf\cong f \cong f1_X$ in $\CA$.
  
For $e : X\to Y$ in $\CA$, let $e_*=(1_X,X,f) : X\to Y$.
Notice that $e^*=(e,X,1_X) : Y\to X$ is a right biadjoint for $e_*$ in the tricategory $\mathrm{Spn}\CA$.        

\begin{proposition}\label{charleftadj}
Let $\CA$ be a finitely complete bicategory.
The following conditions on a span $(u,S,p)$ from $X$ to $Y$ in $\CA$ are equivalent:
\begin{itemize}
\item[(i)] the morphism $(u,S,p): X\to Y$ has a right biadjoint in the tricategory $\mathrm{Spn}\CA$;
\item[(ii)] the morphism $u : S \to X$ is an equivalence in $\CA$;
\item[(iii)] the morphism $(u,S,p): X\to Y$ is equivalent in $\mathrm{Spn}\CA$ to $f_*$ for some morphism $f$ in $\CA$;
\item[(iv)] the morphism $(u,S,p): X\to Y$ is a groupoid fibration in the tricategory $\mathrm{Spn}\CA$.
\end{itemize}
\end{proposition}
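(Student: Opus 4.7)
My plan is to close the equivalences via (ii) $\Leftrightarrow$ (iii), (iii) $\Rightarrow$ (i), (i) $\Rightarrow$ (ii), (iii) $\Rightarrow$ (iv), (iv) $\Rightarrow$ (ii), so that the four conditions become mutually equivalent.

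The equivalence (ii) $\Leftrightarrow$ (iii) is a direct unpacking of morphisms of spans. If $u : S \to X$ is an equivalence with pseudo-inverse $u^{\dagger}$, then $u^{\dagger}$ underlies a morphism of spans $(1_X, X, pu^{\dagger}) \to (u, S, p)$ whose inverse is furnished by $u$, exhibiting $(u, S, p) \simeq (pu^{\dagger})_*$. Conversely, an equivalence $(u, S, p) \simeq (1_X, X, f)$ in $\mathrm{Spn}\CA(X, Y)$ is carried by a span-morphism whose underlying $\CA$-morphism, forced by the left-leg triangle to be isomorphic to $u$, combines with the span-equivalence inverse to produce a quasi-inverse for $u$ in $\CA$. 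Then (iii) $\Rightarrow$ (i) is immediate because $f_* \dashv f^*$ was recorded just above the proposition and the property of possessing a right biadjoint is invariant under equivalence of 1-morphisms in the tricategory.

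For (i) $\Rightarrow$ (ii), the key technical step, I would decompose $(u, S, p) \simeq p_* \circ u^*$ (the composition-bipullback at $1_S$ is trivial) and suppose that $L := (u, S, p)$ has a right biadjoint $R = (v, T, q)$ with unit $\eta : 1_X \to RL$ and counit $\epsilon : LR \to 1_Y$. Computing $RL$ via the bipullback $P$ of $p$ and $v$ gives the span $(u\pi_1, P, q\pi_2) : X \to X$, so $\eta$ is a morphism of spans supplied by an $\CA$-morphism $h : X \to P$ satisfying $u\pi_1 h \cong 1_X$. Setting $s := \pi_1 h$ already yields $us \cong 1_X$. For the reverse isomorphism $su \cong 1_S$, I would unpack the first triangle identity $(\epsilon L)(L\eta) \cong 1_L$ as a 2-cell in $\mathrm{Spn}\CA(X, Y)$; after tracking through the bipullbacks computing $L\eta$ and $\epsilon L$ and invoking the universal property of $P$, its underlying component on $S$ is canonically isomorphic to $su$, whence $u$ is an equivalence in $\CA$.

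The implication (iii) $\Rightarrow$ (iv) is by explicit computation: $f_* \circ -: \mathrm{Spn}\CA(K, X) \to \mathrm{Spn}\CA(K, Y)$ acts as $(a, A, b) \mapsto (a, A, fb)$, and a morphism $g : (c, C, e) \to (a, A, fb)$ in the codomain lifts canonically to $g : (c, C, bg) \to (a, A, b)$; cartesianness of the lift, together with the verification for 2-cells, follows from the fact that morphisms of spans into $(a, A, b)$ are determined by their underlying $\CA$-morphism plus a coherent isomorphism with $a$. For (iv) $\Rightarrow$ (ii), I would invoke that $L \circ -$ is a groupoid fibration (hence conservative, by Proposition~\ref{gfibsconserve}) for every $K$, and apply essential surjectivity at $K = X$ to the canonical morphism of spans $u : u_* u^* \to 1_X$ in $\mathrm{Spn}\CA(X, X)$; the resulting cartesian lift, together with conservativity, forces $u$ to admit an inverse up to invertible 2-cell in $\CA$. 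The principal obstacle is the bookkeeping in (i) $\Rightarrow$ (ii): identifying the underlying $\CA$-morphism on $S$ of the triangle-identity 2-cell $(\epsilon L)(L\eta)$ requires careful use of the bipullback universal properties together with coherence of the associators and unitors in $\mathrm{Spn}\CA$, since composition of spans is only associative and unital up to invertible 3-cell.
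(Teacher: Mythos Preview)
Your treatment of (i)$\Leftrightarrow$(ii)$\Leftrightarrow$(iii) is sound and more explicit than the paper's, which simply remarks that these three are equivalent ``essentially as in the case where $\CA$ is a category'' and cites \cite{26}. Your (iii)$\Rightarrow$(iv) is also fine; the paper makes essentially the same observation in the Remark immediately following the proposition.

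The gap is in (iv)$\Rightarrow$(ii). You invoke conservativity of $L\circ-$ and propose to ``apply essential surjectivity at $K=X$ to the canonical morphism of spans $u : u_*u^*\to 1_X$'', but it is not clear what is being lifted or why the outcome forces $u$ to be invertible. Conservativity would give the conclusion if you knew that $L\varepsilon : Lu_*u^*\to L$ is an equivalence in $\mathrm{Spn}\CA(X,Y)$, but the triangle identity for $u_*\dashv u^*$ only shows $u^*\varepsilon$ is split epi, not invertible; and the cartesian-lift clause of the groupoid-fibration definition gives you lifts \emph{of morphisms in the codomain}, not invertibility of images of morphisms from the domain. As written, the argument does not close.

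The paper's route is structurally different and avoids this difficulty. It uses Proposition~\ref{gfibterm}: since $\mathrm{Spn}\CA(K,X)$ has a terminal object (the span $K\xla{\mathrm{pr}_1}K\times X\xra{\mathrm{pr}_2}X$, using finite completeness of $\CA$), the pseudofunctor $P_K=\mathrm{Spn}\CA(K,p_*u^*)$ is a groupoid fibration iff the canonical comparison $J_{P_K}$ into a slice is a biequivalence. One then factors $J_{P_K}$ as $\mathrm{Spn}\CA(K,u^*)$ followed by $J_{\mathrm{Spn}\CA(K,p_*)}$; the latter is always a biequivalence (since $p_*$ is a groupoid fibration for free), so $P_K$ is a groupoid fibration for all $K$ iff $u^*$ is an equivalence in $\mathrm{Spn}\CA$, iff $u$ is an equivalence in $\CA$. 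The missing idea in your attempt is precisely this reduction via Proposition~\ref{gfibterm} and the existence of terminal objects in the hom-bicategories; without it, there is no evident way to extract invertibility of $u$ from the fibration hypothesis alone.
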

\begin{proof} 
The equivalence of (i), (ii) and (iii) is essentially as in the case where $\CA$ is a category; see \cite{26}. We will prove the equivalence of (ii) and (iv).
Put $\mathfrak{S}=\mathrm{Spn}\CA$ to save space.  
Using Proposition~\ref{gfibterm} and the fact that the span 
$K\xla{\mathrm{pr_1}}K\times X\xra{\mathrm{pr_2}}X$ is a terminal object in
the bicategory $\mathfrak{S}(K,X)$, we see that the pseudofunctor $P_K : =  \mathfrak{S}(K,X)\xra{\mathfrak{S}(K,p_*u^*)} \mathfrak{S}(K,Y)$ is
a groupoid fibration if and only if the canonical pseudofunctor 
$J_{P_K}$ in the diagram \eqref{J_P_K}
is a biequivalence.
\begin{eqnarray}\label{J_P_K}
\begin{aligned}
\xymatrix{
& \mathfrak{S}(K,X) \ar[ld]_{\mathfrak{S}(K,u^*)}^(0.5){\phantom{AAAAAA}}="1"   \ar[rd]^{J_{P_K}}_(0.5){\phantom{AAAAAA}}="2" \ar@{=>}"1";"2"^-{\simeq}
\\
\mathfrak{S}(K,S) \ar[rr]_-{J_{\mathfrak{S}(K,p_*)}} && \mathfrak{S}(K,Y)/(\mathrm{pr_1},K\times S, p\mathrm{pr_2}) 
}
\end{aligned}
\end{eqnarray}
However, we see easily that $J_{P_K}$ does factor up to equivalence as shown in \eqref{J_P_K} where $J_{\mathfrak{S}(K,p_*)}$ is a biequivalence. 
So $p_*u^* :X\to Y$ is a groupoid fibration if and only if $\mathfrak{S}(K,u^*)$ is
a biequivalence for all $K$; that is, if and only $u$ is an equivalence in $\CA$.     
\end{proof}

\begin{remark}
\begin{itemize}
\item[(a)] In fact (ii) implies (iv) in Proposition~\ref{charleftadj} requires no assumption on
the bicategory $\CA$. For, it is straightforward to check (compare Example~\ref{amg}) that $p_* : \mathrm{Spn}\CA(K,S)\to \mathrm{Spn}\CA(K,Y)$ is a groupoid fibration for all $K$; 
this does not even require bipullback in $\CA$ since we only need the hom bicategories of $\mathrm{Spn}\CA$. 
\item[(b)] Given that $p_*$ is a groupoid fibration, we can prove the converse (iv) implies (ii) by noting that $p_*u^*$ is a groupoid fibration if and only if $u^*$ is 
(compare (i) of Proposition~\ref{bipbproperties}). So, provided $\mathrm{Spn}\CA(K,Y)$ has a terminal
object (as guaranteed by the finite bicategorical limits in $\CA$), we deduce
that $u^*$ is a biequivalence using Proposition~\ref{gfibterm} and $u_*\dashv u^*$.   
\end{itemize}  
\end{remark}

\begin{remark}
If $\CC$ is a finitely complete category (regarded as a bicategory with only identity 2-cells) then the tricategory $\mathrm{Spn}\CC$ has only identity 3-cells; it is a bicategory. 
We are interested in spans in such a bicategory $\CA = \mathrm{Spn}\CC$.
The problem is that bipullbacks do not exist in this kind of $\CA$ in general.
Hence we must hone our concepts to restricted kinds of spans in $\CA$.
\end{remark}

\section{More on bipullbacks and groupoid fibrations}\label{bgf}

In Section~\ref{siabc}, we defined groupoid fibrations in a tricategory. This applies in a bicategory $\CA$ regarded as a tricategory by taking only identity 3-cells.
Then the 2-cells in each $\CA(A,B)$ are invertible (identities in fact) so condition (iii) of Definition~\ref{dfib} is automatic.

\begin{proposition}\label{dfib_cotensor} 
Suppose $p : E\to B$ is a morphism in a bicategory $\CA$ for which  $E^{\mathbf{2}}$ and $B^{\mathbf{2}}$ exist.
Then $p : E\to B$ is a groupoid fibration in $\CA$ if and only if the square
 \begin{equation*}
 \begin{aligned}
\xymatrix{
E^{\mathbf{2}} \ar[d]_{p^{\mathbf{2}}}^(0.5){\phantom{aaaa}}="1" \ar[rr]^{\mathrm{c}}  && E \ar[d]^{p}_(0.5){\phantom{aaaa}}="2" \ar@{<=}"1";"2"^-{\cong}
\\
B^{\mathbf{2}} \ar[rr]_-{\mathrm{c}} && B 
}
 \end{aligned}
\end{equation*}
is a bipullback.  
\end{proposition}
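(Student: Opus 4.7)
The plan is to test the bipullback by applying the representables $\CA(K,-)$, reduce---via the cotensor universal property---to the classical arrow-square characterization of groupoid fibrations between 1-categories, and then match this with Definition~\ref{defgfib} applied to the tricategory obtained from $\CA$ by taking identity 3-cells.

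First I would observe that a square in $\CA$ is a bipullback iff its image under every representable $\CA(K,-)$ is a bipullback in $\mathrm{Cat}$. Using the defining cotensor equivalences $\CA(K, E^{\mathbf{2}}) \simeq \CA(K,E)^{\mathbf{2}}$ and $\CA(K, B^{\mathbf{2}}) \simeq \CA(K,B)^{\mathbf{2}}$, which are pseudonatural in both arguments, this image is equivalent to the square of arrow-categories
\[
\xymatrix@C=3em{
\CA(K,E)^{\mathbf{2}} \ar[d]_{\CA(K,p)^{\mathbf{2}}} \ar[r]^-{\mathrm{cod}} & \CA(K,E) \ar[d]^{\CA(K,p)} \\
\CA(K,B)^{\mathbf{2}} \ar[r]_-{\mathrm{cod}} & \CA(K,B)
}
\]
in $\mathrm{Cat}$. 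The horizontal legs come out as codomain functors because $\mathrm{c}$ is, by construction, the morphism named by the identity on $A^{\mathbf{2}}$, and the vertical leg becomes post-composition with $\CA(K,p)$ by pseudonaturality in the target variable.

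Second, I would invoke the classical fact that a functor $q : \CE \to \CB$ between 1-categories has its codomain square a bipullback in $\mathrm{Cat}$ iff $q$ is a groupoid fibration in the sense of Section~\ref{dfib}. Essential surjectivity of the canonical comparison $\CE^{\mathbf{2}} \to \CB^{\mathbf{2}} \times_{\CB}^{\mathrm{ps}} \CE$ encodes the lifting axiom (i), while fully faithfulness encodes condition (ii) that every morphism of $\CE$ is cartesian for $q$---the latter being a direct repackaging of the pullback square \eqref{cart}.

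Finally, applying Definition~\ref{defgfib} to the tricategory with identity 3-cells obtained from $\CA$ says that $p$ is a groupoid fibration iff $\CA(K,p)$ is one for every $K$; since the hom-categories of $\CA$ appear as locally discrete bicategories, clause (iii) is vacuous and (i),(ii) specialise to the Section~\ref{dfib} conditions. Chaining the three equivalences gives both directions of the proposition. The main obstacle, though really just bookkeeping, lies in the first paragraph: one has to trace the data defining $\mathrm{c}$ and $p^{\mathbf{2}}$ through the pseudonatural cotensor equivalence carefully enough to see that it intertwines the two squares up to coherent natural isomorphism.
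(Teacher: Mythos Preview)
Your proposal is correct and follows essentially the same route as the paper: reduce via representables and the cotensor universal property to $\mathrm{Cat}$, then identify the comparison $\CA(K,E)^{\mathbf 2}\to \CA(K,B)^{\mathbf 2}\times^{\mathrm{ps}}_{\CA(K,B)}\CA(K,E)$ with the canonical functor $j_p$ into the comma category $\CA(K,B)/\CA(K,p)$ and invoke the characterisation of groupoid fibrations in Proposition~\ref{propgfib}. Your write-up is merely more explicit about the bookkeeping (the cotensor equivalences and the vacuity of clause~(iii)) than the paper's one-paragraph proof.
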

\begin{proof}
Since all concepts are defined representably, it suffices to check this for the bicategory $\CA = \mathrm{Cat}$ where the bipullback of $\mathrm{c}$ and $p$
is the comma category $B/p$. So the square in the Proposition is a bipullback if and only if the canonical functor $j_p : E^{\mathbf{2}} \to B/p$ is an equivalence. We have the result by looking at Proposition~\ref{propgfib} in the bicategory case.   
\end{proof}

\begin{proposition}\label{bipbproperties} 
Suppose $\CA$ is a bicategory.
\begin{itemize}
\item[(i)] Suppose $r \cong p q$ with $p$ a groupoid fibration in $\CA$. Then $r$ is a groupoid fibration if and only if $q$ is.
\item[(ii)] In the bipullback \eqref{bipb} in $\CA$, if $p$ is a groupoid fibration then so is $d$.  
\item[(iii)] Suppose \eqref{bipb} is a bipullback in $\CA$ and $p$ is a groupoid fibration. For any square
 \begin{equation}\label{laxsquare}
 \begin{aligned}
\xymatrix{
K \ar[d]_{v}^(0.5){\phantom{aaaa}}="1" \ar[rr]^{u}  && A \ar[d]^{n}_(0.5){\phantom{aaaa}}="2" \ar@{<=}"1";"2"^-{\psi}
\\
B \ar[rr]_-{p} && C 
}
 \end{aligned}
\end{equation} 
in $\CA$ with $\psi$ not necessarily invertible, there exists a diagram
 \begin{eqnarray}
\begin{aligned}
\xymatrix{
& & K \ar[d]|-{ h} \ar[lld]_-{v} \ar[rrd]^-{u} \ar @{} [ld] | {\stackrel{\lambda} \Leftarrow} 
\ar @{} [rd] | {\stackrel{\rho\cong} \Leftarrow} & &
\\
B   && P \ar[ll]^-{c} \ar[rr]_-{d} & & A \ , }
\end{aligned}
\end{eqnarray}
(with $\lambda$ invertible if and only if $\psi$ is) which pastes onto
\eqref{bipb} to yield $\psi$. This defines on objects an inverse equivalence
of the functor from the category of such $(\lambda,h,\rho)$ to the category of
diagrams \eqref{laxsquare} obtained by pasting onto \eqref{bipb}.    
\end{itemize}
\end{proposition}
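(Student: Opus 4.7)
The plan is to verify all three parts representably, i.e.\ by applying $\CA(K,-)$ for each object $K$ and reducing to the corresponding statements in $\mathrm{Cat}$. Indeed, bipullbacks in $\CA$ are exactly the squares sent to bipullbacks by each $\CA(K,-)$, and by Definition~\ref{defgfib} applied to a bicategory viewed as a tricategory (equivalently, by Proposition~\ref{dfib_cotensor}), $p$ is a groupoid fibration in $\CA$ if and only if each functor $\CA(K,p)$ is a classical groupoid fibration. So throughout I may assume $\CA = \mathrm{Cat}$, where the bipullback in \eqref{bipb} can be taken to be the iso-comma category $P = n/_{\mathrm{ps}}p$.

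For part (i), after representing, the claim is the standard composition/right-cancellation property for groupoid fibrations in $\mathrm{Cat}$. Cartesian morphisms compose, so if $q$ is a groupoid fibration then so is $pq \cong r$; conversely, given that $r$ and $p$ are groupoid fibrations, a morphism $\beta: b \to q(e)$ lifts first by lifting $p(\beta)$ cartesianly through $r$ and then observing that because $p$ is a fibration with groupoid fibers the resulting arrow actually lies over $\beta$ up to the isomorphism $r\cong pq$. Cartesianness of every morphism transfers by the same argument, using that $p$ reflects isomorphisms on fibers.

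For part (ii), I would use the explicit description of $P$ as $n/_{\mathrm{ps}}p$: a morphism $\beta:a'\to da$ in $A$ lifts to $P$ by taking a $p$-cartesian lift of $\alpha^{-1}\circ n(\beta)$ in $B$ (where $\alpha$ is the structure iso $pca\cong nda$), which assembles into the required cartesian morphism in $P$ whose $d$-image is $\beta$. Groupoidality of the fibers of $d$ is inherited from that of the fibers of $p$.

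Part (iii) is the substantial step and the one I expect to be the main obstacle, because of the handling of the non-invertible $\psi$. Representably the claim reduces to the following: given a lax square filled by $\psi$ over the iso-comma $P = n/_{\mathrm{ps}}p$ with $p$ a groupoid fibration, produce $(\lambda, h, \rho)$ as described, naturally and essentially uniquely. I would work pointwise: for each $k\in K$, apply the cartesian-lifting property of $p$ to the component $\psi_k$ to obtain a morphism $\lambda_k : h(k)\to v(k)$ in $B$ together with an isomorphism $\rho_k : p h(k) \cong nu(k)$ whose composite with $p(\lambda_k)$ recovers $\psi_k$; the triple $(h(k), \rho_k, u(k))$ is the value of $h$ at $k$. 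Functoriality of $h$ on a morphism $\kappa:k\to k'$ is then forced by the universal property of cartesian morphisms applied to the naturality square of $\psi$ along $\kappa$, and the same universal property gives the inverse equivalence of the functor from $(\lambda, h, \rho)$-data to lax squares. Finally, $\lambda$ is invertible precisely when $\psi$ is, because the fibers of $p$ are groupoids and a $p$-cartesian morphism is invertible as soon as its image under $p$ is.
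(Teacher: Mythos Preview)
Your proposal is correct and follows essentially the same line as the paper: parts (i) and (ii) are treated as standard, and for (iii) the key idea is to use the cartesian lifting property of the groupoid fibration $p$ to factor $\psi$ as an invertible 2-cell followed by $p$ applied to a (not necessarily invertible) 2-cell, then invoke the bipullback universal property on the invertible part. The only difference is cosmetic: the paper works directly in $\CA$, lifting $\psi:nu\Rightarrow pv$ through $\CA(K,p)$ to obtain $\chi:w\Rightarrow v$ and an invertible $\nu:nu\Rightarrow pw$ with $\psi=(p\chi)\nu$, and then applies the bipullback property of \eqref{bipb} in $\CA$ to the square $\nu$, whereas you first reduce to $\mathrm{Cat}$ and then argue pointwise over $k\in K$; this extra pointwise layer is not needed, since the lifting and bipullback properties are already available at the level of 2-cells in $\CA$.
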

\begin{proof}
These are essentially standard facts about groupoid fibrations, especially (i) and (ii). For (iii) we use the groupoid fibration property of $p$ to lift the 2-cell $\psi : nu \Ra pv$ to a 2-cell $\chi : w \Ra v$ with an invertible 2-cell $\nu : nu\Ra pw$ such that 
$\psi = (p\chi)\nu$. Now use the bipullback property of \eqref{bipb} to factor the square $\nu : nu\Ra pw$ as a span morphism $(\sigma, h, \rho) : (w,K,u)\to (c,P,d)$ pasted onto \eqref{bipb}. Then $\lambda$ is the composite of $\sigma$ and $\chi$.    
\end{proof}

The next result is related to Proposition~5 of \cite{8}. 

\begin{proposition}\label{thetahat}
In the bipullback square \eqref{bipb} in the bicategory $\CA$, 
if $p$ is a groupoid fibration and $n$ has a right adjoint $n\dashv u$ 
then $c$ has a right adjoint $c\dashv v$ 
such that the mate $\hat{\theta} : dv \Ra up$ of $\theta : nd\Ra pc$ is invertible.
\end{proposition}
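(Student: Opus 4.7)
The plan is to manufacture $v$ by applying Proposition~\ref{bipbproperties}(iii) to the non-invertible square with top edge $up : B \to A$, left edge $1_B : B \to B$, and 2-cell $\epsilon p : n \cdot up \Ra p \cdot 1_B$, where $\epsilon : nu \Ra 1_C$ is the counit of $n \dashv u$. The hypothesis that $p$ is a groupoid fibration is exactly what lets Proposition~\ref{bipbproperties}(iii) apply to this (non-invertible) square, yielding a morphism $v : B \to P$, an invertible 2-cell $\rho : dv \cong up$, and a (generally non-invertible) 2-cell $\lambda : cv \Ra 1_B$ satisfying the pasting equation
\begin{equation*}
(p\lambda)(\theta v)(n\rho^{-1}) = \epsilon p, \quad \text{equivalently} \quad (p\lambda)(\theta v) = (\epsilon p)(n\rho). \quad (\ast)
\end{equation*}

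I would then set the counit to be $\epsilon' := \lambda$ and construct the unit $\eta' : 1_P \Ra vc$ using the two-dimensional universal property of the bipullback: this reduces to producing a compatible pair of 2-cells $d\eta' : d \Ra dvc$ and $c\eta' : c \Ra cvc$. Take $d\eta'$ to be the mate of $\theta$ under $n \dashv u$ post-composed with $\rho^{-1}c$,
\begin{equation*}
d\eta' := (\rho^{-1}c)(u\theta)(\eta d) : d \Longrightarrow und \Longrightarrow upc \Longrightarrow dvc,
\end{equation*}
and obtain $c\eta'$ by cartesian-lifting the forced value $(\theta vc)(n\cdot d\eta')\theta^{-1}$ through the groupoid fibration $\CA(P,p)$ of categories. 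The first triangle identity $(\lambda c)(c\eta') = 1_c$ holds strictly because a short manipulation using $(\ast)$ and the triangle identity $(\epsilon n)(n\eta) = 1_n$ for $n \dashv u$ sends the $p$-image of both sides to $1_{pc}$, and uniqueness of cartesian lifts over identities forces equality. The second triangle identity is verified analogously via the bipullback universal property by checking its $d$- and $c$-components.

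Finally, the mate $\hat\theta : dv \Ra up$ is computed directly:
\begin{align*}
\hat\theta &= (up\lambda)(u\theta v)(\eta dv) = u\bigl((p\lambda)(\theta v)\bigr)(\eta dv) \\
&\stackrel{(\ast)}{=} u\bigl((\epsilon p)(n\rho)\bigr)(\eta dv) = (u\epsilon p)(un\rho)(\eta dv) \\
&= (u\epsilon p)(\eta up)\rho = \bigl((u\epsilon)(\eta u)\bigr)p\cdot\rho = \rho,
\end{align*}
using the naturality of $\eta$ applied to $\rho$ (to move $\eta$ past $\rho$) together with the triangle identity $(u\epsilon)(\eta u) = 1_u$. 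Since $\rho$ is invertible by construction, so is $\hat\theta$. The main obstacle is the strict verification of the triangle identities for the adjunction, which relies on the uniqueness of cartesian lifts afforded by $p$ being a groupoid fibration; by contrast, the construction of $v$ from Proposition~\ref{bipbproperties}(iii) and the mate calculation are essentially mechanical once the pasting equation $(\ast)$ is in hand.
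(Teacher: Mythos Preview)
Your proof is correct and follows essentially the same route as the paper's. The paper carries out the two-step construction (first lift $\varepsilon p$ through the groupoid fibration $p$ to a 2-cell $\chi : w \Ra 1_B$ with invertible $\tau : nup \cong pw$, then factor the resulting \emph{invertible} square through the bipullback) which you compress into a single invocation of Proposition~\ref{bipbproperties}(iii); the resulting $v$, counit $\lambda$, and invertible $\rho$ are the same. Your mate computation $\hat\theta = \rho$ is exactly the conclusion the paper asserts (modulo the paper's evident typo writing ``$\hat\theta = \rho$'' where its $\rho$ and $\lambda$ play the opposite roles to yours). The only substantive difference is that the paper simply asserts ``Then $c\dashv v$'' without constructing the unit or checking triangle identities, whereas you sketch these; your sketch is correct, though it would be clearer to say explicitly that $c\eta'$ is the cartesian lift of the forced value along the cartesian arrow $\lambda c : cvc \Ra c$, which simultaneously produces $c\eta'$ and the first triangle identity in one stroke.
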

\begin{proof}
Let $\varepsilon : nu \Ra 1_C$ be the counit of $n\dashv u$.
By the groupoid fibration property of $p$, there exists $\chi : w \Ra 1_B$ and an
invertible $\tau : nup\Ra pw$ such that $(p\chi) \tau = \varepsilon p$.
By the bipullback property of \eqref{bipb}, there exists a span morphism  
 \begin{eqnarray*}
\begin{aligned}
\xymatrix{
& & B \ar[d]|-{v} \ar[lld]_-{up} \ar[rrd]^-{ w} \ar @{} [ld] | {\stackrel{\lambda\cong} \Leftarrow} 
\ar @{} [rd] | {\stackrel{\rho\cong} \Leftarrow} & &
\\
A   && P \ar[ll]^-{d} \ar[rr]_-{c} & & B  }
\end{aligned}
\end{eqnarray*}
whose pasting onto $\theta$ is $\tau$.
Then $c\dashv v$ with counit $cv \xRa{\rho^{-1}}w \xRa{\chi}1_B$ and we see that $\hat{\theta} = \rho$ is invertible. 
\end{proof}
 
 \begin{proposition}\label{pbstobipbs}
Suppose $\CC$ is a category with pullbacks. Then the pseudofunctor $(-)_*: \CC\to \mathrm{Spn}\CC$ takes pullbacks to bipullbacks.
\end{proposition}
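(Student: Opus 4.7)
My plan is to verify the bipullback universal property from Section~1 directly. Name the pullback square in $\CC$ by $d \colon P \to A$, $c \colon P \to B$, $n \colon A \to C$, $p \colon B \to C$ with $nd = pc$, so that applying $(-)_*$ gives spans $d_*, c_*, n_*, p_*$ satisfying $n_* d_* = p_* c_* = (1_P, P, nd)$ in $\mathrm{Spn}\CC$. For each $K \in \CC$ I must show that the comparison
\[
\Phi_K \colon \mathrm{Spn}\CC(K, P) \lra \mathrm{Spn}\CC(K, n_*) /_{\mathrm{ps}} \mathrm{Spn}\CC(K, p_*), \quad (u, T, w) \longmapsto \bigl((u, T, dw),\, 1,\, (u, T, cw)\bigr),
\]
is an equivalence of categories. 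The key preliminary observation is that since $f_* = (1_X, X, f)$, composing any span $(u, S, v) \colon K \to X$ with $f_*$ via the recipe \eqref{compspans} demands a bipullback of $v$ along $1_X$, which is just $S$; hence post-composition with $f_*$ acts simply by appending $f$ to the right leg of a span. This reduces everything to a statement about right legs in $\CC$.

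For essential surjectivity, an object of the pseudopullback consists of spans $(u_A, T_1, v_A) \colon K \to A$ and $(u_B, T_2, v_B) \colon K \to B$ together with a span isomorphism $\phi$ between their images in $\mathrm{Spn}\CC(K, C)$. Since $\CC$ is $1$-categorical, $\phi$ is simply an isomorphism $T_1 \to T_2$ with $u_B \phi = u_A$ and $p v_B \phi = n v_A$. Relabelling along $\phi$ I may take $T_1 = T_2 = T$ with $n v_A = p v_B$, and then the pullback property of $P$ in $\CC$ yields a unique $w \colon T \to P$ with $dw = v_A$ and $cw = v_B$; the span $(u_A, T, w)$ maps under $\Phi_K$ to the original object up to the isomorphism induced by $\phi$.

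For fully faithfulness, a morphism in the pseudopullback between $\Phi_K(u, T, w)$ and $\Phi_K(u', T', w')$ is a pair of span morphisms $h_1, h_2 \colon T \to T'$ whose images under $n_*$ and $p_*$ are intertwined by the identity connecting isos; this forces $h_1 = h_2 =: h$, and the compatibility conditions $d w' h = d w$ and $c w' h = c w$ collapse by the universal property of $P$ to the single condition $w' h = w$. Such morphisms are therefore precisely span morphisms $(u, T, w) \to (u', T', w')$ in $\mathrm{Spn}\CC(K, P)$. The main obstacle is nothing but the bookkeeping around the iso $\phi$ and the pseudopullback compatibilities; once the right-leg reformulation is in hand, both halves of the equivalence reduce immediately to the $1$-dimensional universal property of $P$ in $\CC$.
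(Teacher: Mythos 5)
Your proof is correct and follows essentially the same route as the paper: identify post-composition with $f_*$ as appending $f$ to the right leg of a span, and then reduce the bipullback universal property of the image square to the $1$-dimensional universal property of the pullback $P$ in $\CC$. The only difference is one of completeness rather than method: the paper's proof records only the object-level factorization (the analogue of your essential-surjectivity step, constructing the unique comparison leg into $P$ and the span morphism with $\rho$ an identity), whereas you also spell out full faithfulness of the comparison functor $\Phi_K$, which the paper leaves implicit.
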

\begin{proof} Let the span $(p,P,q): A\to B$ be the pullback of the cospan $(f,C,g)$ in $\CC$. Consider a square
 \begin{equation*}
 \begin{aligned}
\xymatrix{
X \ar[d]_{(r,T,s)}^(0.5){\phantom{aaaa}}="1" \ar[rr]^{(u,S,v)}  && A \ar[d]^{f_*}_(0.5){\phantom{aaaa}}="2" \ar@{<=}"1";"2"^-{\psi}_-{\cong}
\\
B \ar[rr]_-{g_*} && C 
}
 \end{aligned}
\end{equation*}  
in $\mathrm{Spn}\CC$. The isomorphism $\psi$ amounts to an isomorphism
$h : (u,S,fv) \to (r,T,gs)$ of spans. In particular, $fv=gsh$, so, by the pullback property, there exists a unique $t : S\to P$ such that $pt=v$ and $qt=sh$.
Then we have a morphism of spans
 \begin{eqnarray*}
\begin{aligned}
\xymatrix{
& & X \ar[d]|-{(u,S,t)} \ar[lld]_-{(r,T,s)} \ar[rrd]^-{ (u,S,v)} \ar @{} [ld] | {\stackrel{\lambda\cong} \Leftarrow} 
\ar @{} [rd] | {\stackrel{\rho\cong} \Leftarrow} & &
\\
B   && P \ar[ll]^-{q_*} \ar[rr]_-{p_*} & & A  }
\end{aligned}
\end{eqnarray*}
in which $\lambda$ is $h: (u,S,qt)\to (r,T,s)$ and $\rho$ is an identity.
\end{proof}

 \section{Lifters}
  
Let $\CM$ be a bicategory. 
 
 We use the notation
\begin{equation}\label{rif}
 \begin{aligned}
   \xymatrix{ & & Y \ar[dd]^{n} \\
     K  \ar[rru]^{\mathrm{rif}(n,u)} \ar[rrd]_{u} & & \dtwocell[0.4]{ll}{\varpi} \\
     & & Z}
 \end{aligned}
\end{equation}
to depict a right lifting $\mathrm{rif}(n,u)$ (see \cite{12}) of $u$ through $n$.
The defining property is that pasting a 2-cell $v\Lra \mathrm{rif}(n,u)$ onto the
triangle to give a 2-cell $nv\Lra u$ defines a bijection. 

A morphism $n :Y\to Z$ is called a {\em right lifter} when $\mathrm{rif}(n,u)$ exists for
all $u : K\to Z$.

\begin{example}
 Left adjoint morphisms in any $\CM$ are right lifters (since the lifting is obtained by composing with the right adjoint).
\end{example}
\begin{example}\label{lifterscompose}
Composites of right lifters are right lifters.  
\end{example}
\begin{example}
 Suppose $\CM = \mathrm{Spn}\CC$ with $\CC$ a finitely complete category. 
 If $f : A\to B$ is powerful (in the sense of \cite{104}, elsewhere called exponentiable, and meaning that the
 functor $\CC/B\to \CC/A$, which pulls back along $f$, has a right adjoint $\Pi_f$) in $\CC$ then $f^* : B\to A$ is a right lifter. The formula is $\mathrm{rif}(f^*,(v,T,q)) = (w,U,r)$ where 
 $$(U\xra{(w,r)}K\times B) = \Pi_{1_K\times f}(T\xra{(v,q)}K\times A) \ .$$ 
\end{example}
\begin{example}\label{m1powerful}
Suppose $m=(m_1,E,m_2)$ is a morphism in $\CM = \mathrm{Spn}\CC$ with $\CC$ a finitely complete category. Then $m$ is a right lifter if and only if $m_1$ is powerful.
The previous Examples imply ``if''. Conversely, we can apply the Dubuc Adjoint Triangle
Theorem (see Lemma 2.1 of \cite{104} for example) to see that $\CM(K,{m_1}^*)$ has a right adjoint
for all $K$ because $\CM(K,m)\cong \CM(K,{m_2}_*)\CM(K,{m_1}^*)$ and the unit of ${m_2}_*\dashv {m_2}^*$ is an equalizer. Taking $K$ to be the terminal object, we conclude that 
$m_1$ is powerful.     
\end{example}

\begin{example}\label{forall}
Let $\CE$ be a regular category and let $\mathrm{Rel}\CE$ be the
locally ordered bicategory of relations in $\CE$ as characterized in \cite{26}. 
The objects are the same as for $\CE$ and the morphisms 
$(r_1,R,r_2) : X\to Y$ are jointly monomorphic spans $X\xla{r_1}R\xra{r_2}Y$
in $\CE$.
Let $\mathrm{Sub}X = \mathrm{Rel}\CE(1,X)$ be the partially ordered set
of subobjects of $X\in \CE$. 
If $f : Y\to X$ is a morphism of $\CE$ then pulling back subobjects of $X$ along $f$
defines an order-preserving function $f^{-1} : \mathrm{Sub}X\to \mathrm{Sub}Y$ whose
right adjoint, if it exists, is denoted by $\forall_f : \mathrm{Sub}Y\to \mathrm{Sub}X$.
A similar analysis as in the span case yields that $(r_1,R,r_2) : X\to Y$ is
a right lifter in $\mathrm{Rel}\CE$ if and only if $\forall_{r_1}$ exists.   
\end{example}

\begin{proposition}\label{exactbipb}
Suppose \eqref{bipb} is a bipullback in $\CM$ with $p$ a groupoid fibration.
If $n$ is a lifter then so is $c$ and, for all morphisms $b : K\to B$, the canonical 2-cell
$$d\circ \mathrm{rif}(c,b)\Lra \mathrm{rif}(n,p\circ b)$$
is invertible.
\end{proposition}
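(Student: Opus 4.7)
The plan is to build $\mathrm{rif}(c,b)$ directly using $\mathrm{rif}(n,pb)$ and the bipullback, then verify the universal property via the equivalence of categories in Proposition~\ref{bipbproperties}(iii). First I would form $r:=\mathrm{rif}(n,pb)$ with counit $\psi:nr\Rightarrow pb$, and apply Proposition~\ref{bipbproperties}(iii) to the (generally non-invertible) square $\psi$, obtaining $h:K\to P$ with $\lambda:ch\Rightarrow b$ and an \emph{invertible} $\rho:dh\Rightarrow r$ such that pasting onto \eqref{bipb} recovers $\psi$, equivalently $\psi\circ n\rho=p\lambda\circ\theta_h$. The claim is that $(h,\lambda)$ serves as the right lifting of $b$ through $c$, and that $\rho$ realises the canonical comparison 2-cell $d\circ\mathrm{rif}(c,b)\Rightarrow\mathrm{rif}(n,pb)$.

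For the existence half of the universal property, given $\beta:cg\Rightarrow b$, I would use the lifting property of $n$ to factor $p\beta\circ\theta_g:ndg\Rightarrow pb$ uniquely through $\psi$ as some $\gamma:dg\Rightarrow r$. The defining equality $\psi\circ n\gamma=p\beta\circ\theta_g$ makes $(\gamma,\beta)$ a morphism of lax squares from $\theta_g:ndg\Rightarrow pcg$ to $\psi:nr\Rightarrow pb$. Since $g$ itself, equipped with identity 2-cells in place of $\lambda$ and $\rho$, is a triple whose pasting yields the (invertible) square $\theta_g$, the equivalence of Proposition~\ref{bipbproperties}(iii) lifts $(\gamma,\beta)$ uniquely to a 2-cell $\alpha:g\Rightarrow h$ satisfying both $\rho\circ d\alpha=\gamma$ and $\lambda\circ c\alpha=\beta$; the second equality is the required factorisation.

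For uniqueness, suppose another $\alpha':g\Rightarrow h$ satisfies $\lambda\circ c\alpha'=\beta$. Using the pasting identity $\psi\circ n\rho=p\lambda\circ\theta_h$ together with the interchange law $\theta_h\circ nd\alpha'=pc\alpha'\circ\theta_g$ one computes $\psi\circ n(\rho\circ d\alpha')=p(\lambda\circ c\alpha')\circ\theta_g=p\beta\circ\theta_g$, so uniqueness in the lifting property of $n$ forces $\rho\circ d\alpha'=\gamma$; faithfulness in the equivalence of Proposition~\ref{bipbproperties}(iii) then gives $\alpha'=\alpha$. Finally, the canonical 2-cell $d\circ\mathrm{rif}(c,b)\Rightarrow\mathrm{rif}(n,pb)$ is by construction the unique factorisation through $\psi$ of $p\lambda\circ\theta_h:ndh\Rightarrow pb$, and the identity $\psi\circ n\rho=p\lambda\circ\theta_h$ exhibits $\rho$ itself as this factorisation, so the comparison is invertible. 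The main obstacle is this uniqueness step: since $\lambda$ need not be invertible, $\lambda\circ c\alpha'=\beta$ does not determine $c\alpha'$ directly, so one must first reconstruct the $d$-component from $\beta$ using $\theta$ and the lifting property of $n$, and then invoke the faithfulness clause of Proposition~\ref{bipbproperties}(iii).
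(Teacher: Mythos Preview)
Your argument is correct, but it takes a genuinely different route from the paper's proof. The paper argues representably: applying $\CM(K,-)$ to \eqref{bipb} yields a bipullback in $\mathrm{Cat}$ in which $\CM(K,p)$ is a groupoid fibration and $\CM(K,n)$ has right adjoint $\mathrm{rif}(n,-)$, and then Proposition~\ref{thetahat} supplies a right adjoint to $\CM(K,c)$ together with an invertible mate $\CM(K,d)\circ\mathrm{rif}(c,-)\cong\mathrm{rif}(n,-)\circ\CM(K,p)$; evaluating at $b$ gives both conclusions at once. Your approach instead builds $\mathrm{rif}(c,b)$ by hand from $\mathrm{rif}(n,pb)$ and the bipullback, and checks the universal property directly via the equivalence of Proposition~\ref{bipbproperties}(iii). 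The paper's route is shorter and packages the bookkeeping into the single ``adjoint-mate'' statement of Proposition~\ref{thetahat}; your route is more explicit and makes the r\^ole of the groupoid-fibration hypothesis visible at the level of individual 2-cells.

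One point worth being aware of: your uniqueness step leans on the \emph{faithfulness} part of the equivalence in Proposition~\ref{bipbproperties}(iii), i.e.\ that a 2-cell $\alpha:g\Rightarrow h$ is determined by the pair $(\rho\circ d\alpha,\lambda\circ c\alpha)$. This is true, but it is not obvious from the bipullback alone (since $\CM(K,p)$ need not be faithful); it uses that in the construction of $(\lambda,h,\rho)$ one has $\lambda=\chi\circ\sigma$ with $\sigma$ invertible and $\chi$ \emph{cartesian} for $\CM(K,p)$, so that the cartesian lifting property pins down $c\alpha$ once $d\alpha$ and $\beta$ are known. You are entitled to cite the equivalence as stated, but it is worth noting that this is where the groupoid-fibration hypothesis does real work in your argument.
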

\begin{proof}
For all $K\in \CM$, we have a bipullback square
 \begin{equation*}
 \begin{aligned}
\xymatrix{
\CM(K,P) \ar[d]_{\CM(K,c)}^(0.5){\phantom{aaaaaa}}="1" \ar[rr]^{\CM(K,d)}  && \CM(K,A) \ar[d]^{\CM(K,n)}_(0.5){\phantom{aaaaaa}}="2" \ar@{<=}"1";"2"^-{\cong}
\\
\CM(K,B) \ar[rr]_-{\CM(K,p)} && \CM(K,C) 
}
 \end{aligned}
\end{equation*} 
in $\mathrm{Cat}$ with $\CM(K,p)$ a groupoid fibration and $\CM(K,n)\dashv \mathrm{rif}(n,-)$.
By Proposition~\ref{thetahat}, $\CM(K,c)$ has a right adjoint, so that $c$ is a lifter,
and $\CM(K,d)\mathrm{rif}(c,-)\cong \mathrm{rif}(n,-)\CM(K,p)$.
Evaluating this last isomorphism at $b$ we obtain the isomorphism displayed in the present Proposition. 
\end{proof}

\section{Distributivity pullbacks} 
We now recall Definitions 2.2.1 and 2.2.2 from Weber \cite{Weber2015} of {\em pullback} and {\em distributivity pullback} around a composable pair $(f,g)$ of morphisms in a category $\CC$.
\begin{eqnarray}\label{around}
 \begin{aligned}
\xymatrix{
X \ar[r]^-{p} \ar[d]_-{q} & Z \ar[r]^-{g} & A \ar[d]^-{f}\\
Y \ar[rr]_-{r} & & B
}
 \end{aligned}
\end{eqnarray}
A pullback $(p,q,r)$ around $Z\xra{g}A\xra{f}B$ is a commutative diagram \eqref{around} in which the span $(q,X,gp)$ is a pullback of the cospan $(r,B,f)$ in $\CC$. 

A morphism $t : (p',q',r') \to (p,q,r)$ of pullbacks around $(f,g)$ is a morphism
$t : Y'\to Y$ in $\CC$ such that $rt=r'$. For such a morphism, using the pullback properties, it follows that there is a unique morphism $s : X'\to X$ in $\CC$ such that $ps = p'$ and $qs = tq'$.\footnote {Rather than the single $t$, 
Weber's definition takes the pair $(s,t)$ as
the morphism of pullbacks around $(f,g)$ although he has a typographical error in the condition $rt=r'$.}
This gives a category $\mathrm{PB}(f,g)$. It is worth noting, also using the pullback properties, that the commuting square $qs = tq'$ exhibits the span $(s,X',q')$ as a pullback of the cospan $(q,Y,t)$.

The diagram \eqref{around} is called a distributivity pullback around
$(f,g)$ when it is a terminal object of the category $\mathrm{PB}(f,g)$.   
 
\begin{eqnarray}\label{distributivitypb}
 \begin{aligned}
\xymatrix{
Y \ar[d]_{p_*q^*}^(0.5){\phantom{aaa}}="1" \ar[rr]^{r_*}  && B \ar[d]^{f^*}_(0.5){\phantom{aaa}}="2" \ar@{<=}"1";"2"^-{ }_-{\cong}
\\
Z \ar[rr]_-{g_*} && A 
}
 \end{aligned}
\end{eqnarray}
\begin{proposition}\label{distpbsgivebipbs}
Let $Z\xra{g}A\xra{f}B$ be a composable pair of morphisms in a category $\CC$
with pullbacks.
The diagram \eqref{around} is a pullback around $(f,g)$ in the category $\CC$ if and only if there is a square of the form \eqref{distributivitypb} in the bicategory $\mathrm{Spn}\CC$.
The diagram \eqref{around} is a distributivity pullback around $(f,g)$ in $\CC$ if and only the diagram \eqref{distributivitypb} is a bipullback in $\mathrm{Spn}\CC$.
\end{proposition}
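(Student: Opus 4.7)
The approach is to unpack the composites of spans explicitly, then translate the bipullback condition at each test object into the universal property of a distributivity pullback.

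For the first statement, using $r_* = (1_Y, Y, r)$, $f^* = (f, A, 1_A)$, $q^* = (q, X, 1_X)$, $p_* = (1_X, X, p)$ and $g_* = (1_Z, Z, g)$, one computes directly that $g_* p_* q^* = (q, X, gp) : Y \to A$, while $f^* r_*$ is the canonical span $(\mathrm{pr}_1, P, \mathrm{pr}_2)$ with $P$ a chosen pullback of $r$ and $f$ in $\CC$. An invertible $2$-cell between them is precisely an isomorphism of these two spans, which exists iff the outer rectangle of \eqref{around} commutes and $(q, X, gp)$ itself exhibits $X$ as a pullback of $(r, B, f)$. This is exactly the definition of a pullback around $(f, g)$.

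For the second statement, the key step is a dictionary. An object of the pseudocomma $\mathrm{Spn}\CC(K, g_*)/_{\mathrm{ps}}\mathrm{Spn}\CC(K, f^*)$ is a pair of spans $(c, S, d) : K \to Z$ and $(e, T, h) : K \to B$ with an isomorphism $g_*(c, S, d) \cong f^*(e, T, h)$. Since $f^*(e, T, h) = (e\sigma_1, W, \sigma_2)$ with $W = T \times_B A$, such an isomorphism identifies $S$ up to iso with $W$ and forces $\sigma_2 = gd$; the data are therefore equivalent to a pullback around $(f, g)$ of the form $S \xra{d} Z \xra{g} A \xla{f} B \xla{h} T$ with projection $\sigma_1 : S \to T$, together with a map $e : T \to K$. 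A parallel unpacking identifies morphisms of the pseudocomma with morphisms in $\mathrm{PB}(f, g)$ over $K$, and the canonical functor from $\mathrm{Spn}\CC(K, Y)$ into the pseudocomma corresponds, under this dictionary, to pulling back $(p, q, r)$ along the right leg $b : T \to Y$ of a span $(a, T, b) : K \to Y$ and recording $a$ as the augmentation to $K$.

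Granting this dictionary, both directions are immediate. For ``distributivity pullback $\Rightarrow$ bipullback'', a cone at $K$ determines a pullback around $(f, g)$; terminality of $(p, q, r)$ in $\mathrm{PB}(f, g)$ produces a span $K \to Y$ realising the cone, yielding essential surjectivity, and fully-faithfulness reduces similarly to the uniqueness clause of terminality. For the converse, a pullback around $(p', q', r')$ with apex $Y'$ and middle $X'$ provides a canonical cone at $K = Y'$ via the spans $(q', X', p') : Y' \to Z$ and $(1_{Y'}, Y', r') : Y' \to B$; essential surjectivity produces a span $(a, T, b) : Y' \to Y$ hitting it, and the iso $(a, T, rb) \cong (1_{Y'}, Y', r')$ forces $a$ to be invertible, so that $t := b : Y' \to Y$ provides the required morphism in $\mathrm{PB}(f, g)$, with uniqueness from fully-faithfulness. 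The main obstacle is establishing the dictionary in paragraph three carefully enough to match morphisms in the pseudocomma (which are isomorphism classes of span morphisms) with strict morphisms in $\mathrm{PB}(f, g)$; once that is done, both implications reduce to direct appeals to the universal property of a terminal object.
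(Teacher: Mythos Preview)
Your proposal is correct and follows essentially the same route as the paper: both unwind the span composites to see that the isomorphism in \eqref{distributivitypb} is exactly the pullback-around condition, and then identify a test square at $K$ with a pullback around $(f,g)$ augmented by a map into $K$, so that the bipullback property reduces to terminality in $\mathrm{PB}(f,g)$. One small slip in your converse direction: when you write ``$t := b : Y' \to Y$'', the domain of $b$ is $T$, not $Y'$; what you mean is $t = b\,a^{-1}$ after using the invertibility of $a$ (this is precisely the paper's remark that the comparison span, composing with $r_*$ to give $r'_*$, must be of the form $k_*$).
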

\begin{proof} Passage around the top and right sides of \eqref{distributivitypb} produces the pullback span of the cospan $Y\xra{r}B\xla{f}A$.
Passage around the left and bottom sides produces the left and top sides of
\eqref{around}. That these passages be isomorphic says \eqref{around} is a pullback. 

Suppose \eqref{around} is a distributivity pullback. We will show that
\eqref{distributivitypb} is a bipullback. 
Take a square of the form
 \begin{eqnarray}\label{gensquare}
 \begin{aligned}
\xymatrix{
K \ar[d]_{u_*v^*}^(0.5){\phantom{aaa}}="1" \ar[rr]^{s_*t^*}  && B \ar[d]^{f^*}_(0.5){\phantom{aaa}}="2" \ar@{<=}"1";"2"^-{ }_-{\cong}
\\
Z \ar[rr]_-{g_*} && A 
}
 \end{aligned}
\end{eqnarray}
in $\mathrm{Spn}\CC$. This square amounts to a diagram
 \begin{eqnarray*}
 \begin{aligned}
\xymatrix{
& S\ar[d]^{a} \ar[r]^{u} \ar[ld]_{v}& Z \ar[r]^{g} & A \ar[d]^{f} \\
K & T \ar[l]^{t}\ar[rr]_{s} & & B
}
 \end{aligned}
\end{eqnarray*}
in $\CC$ in which the right-hand region is a pullback around $(f,g)$.
By the distributivity property, there exists a unique pair $(h,k)$ such that
the diagram
  \begin{eqnarray*}
 \begin{aligned}
\xymatrix{
& S\ar[d]^{h} \ar[r]^{a} \ar[ld]_{u}& T \ar[rd]^{s} \ar[d]_{k} &  \\
Z & X \ar[l]^{p}\ar[r]_{q} & Y \ar[r]_{r} & B
}
 \end{aligned}
\end{eqnarray*}
commutes; moreover, the square is a pullback.
Thus we have a span morphism
 \begin{eqnarray*}
\begin{aligned}
\xymatrix{
& & K \ar[d]|-{(t,T,k)} \ar[lld]_-{u_*p^*} \ar[rrd]^-{s_*t^*} \ar @{} [ld] | {\stackrel{\lambda\cong} \Leftarrow} 
\ar @{} [rd] | {\stackrel{\rho\cong} \Leftarrow} & &
\\
Z   && Y \ar[ll]^-{p_*q^*} \ar[rr]_-{r_*} & & B  }
\end{aligned}
\end{eqnarray*}
which pastes onto \eqref{distributivitypb} to yield \eqref{gensquare};
in fact $\rho$ is an identity.
To prove the bipullback 2-cell property, suppose we have span morphisms
$e : (v',S',u')\to (v,S,u)$ and $j : (t',T',s')\to (t,T,s)$ such that composing the first with $g_*$ is the composite of the second with $f^*$.
Then, in obvious notation, $j : (u',a',s') \to (u,a,s)$ is a morphism in $\mathrm{PB}(f,g)$. By the terminal property of $(p,q,r)$, we have $k'=kj$.
This gives the span morphism $j : (t',T',k')\to (t,T,k)$ which is unique as required.

Conversely, suppose \eqref{distributivitypb} is a bipullback. 
We must see that $(p,q,r)$ is terminal in $\mathrm{PB}(f,g)$.
Take another object $(p',q',r')$ of $\mathrm{PB}(f,g)$.
We have the square
\begin{eqnarray*}
 \begin{aligned}
\xymatrix{
Y' \ar[d]_{p'_*q'^*}^(0.5){\phantom{aaa}}="1" \ar[rr]^{r'_*}  && B \ar[d]^{f^*}_(0.5){\phantom{aaa}}="2" \ar@{<=}"1";"2"^-{ }_-{\cong}
\\
Z \ar[rr]_-{g_*} && A 
}
 \end{aligned}
\end{eqnarray*}
which allows us to use the bipullback property to obtain a span morphism
$$(p'_*q'^*,Y',r'_*) \to (p_*q^*,Y,r_*)$$ in $\mathrm{Spn}\CC$ which is compatible with the squares.
Since the span $Y'\to Y$ in this morphism composes with $r_*$ to be isomorphic
to $r'_*$, we see that it has the form $k_*$ for some $k : Y'\to Y$ in $\CC$.
Thus we have our unique $k : (p',q',r')\to (p,q,r)$ in $\mathrm{PB}(f,g)$.            
 \end{proof}

 \section{Polynomials in calibrated bicategories}
 
 Recall from \cite{Ben1967} Section 7 that the {\em Poincar\'e category} $\Pi \CK$
of a bicategory $\CK$ has the same objects as $\CK$, however, the
homset $\Pi \CK(H,K)$ is the set $\pi_0(\CK(H,K))$ of undirected path components of the homcategory
$\CK(H,K)$. Composition is induced by composition of morphisms in $\CK$.
The {\em classifying category} $\mathrm{Cl}\CK$ of $\CK$ is obtained by taking isomorphism classes of morphisms in each category $\CK(H,K)$. 
If $\CK$ is locally groupoidal then $\Pi \CK$ is equivalent to $\mathrm{Cl}\CK$. 

We adapt B\'enabou's notion of ``cat\'egorie calibr\'ee'' \cite{Ben1975} to our present purpose.

\begin{Definition}
A class $\CP$ of morphisms, whose members are called {\em neat} (``propres'' in French), in a bicategory $\CM$ is called a {\em calibration of $\CM$} when it satisfies the following conditions
\begin{itemize}
\item[P0.] all equivalences are neat and, if $p$ is neat and there exists an invertible 2-cell $p\cong q$, then $q$ is neat;
\item[P1.] for all neat $p$, the composite $p\circ q$ is neat if and only if $q$ is neat;
\item[P2.] every neat morphism is a groupoid fibration;
\item[P3.] every cospan of the form $$S\xra{p}Y\xla{n}T \ ,$$ with $n$ a right lifter and $p$ neat, has
a bipullback \eqref{bipb2} in $\CM$ with $\tilde{p}$ neat.
 \begin{equation}\label{bipb2}
 \begin{aligned}
\xymatrix{
P \ar[d]_{\tilde{n}}^(0.5){\phantom{aaaa}}="1" \ar[rr]^{\tilde{p}}  && T \ar[d]^{n}_(0.5){\phantom{aaaa}}="2" \ar@{<=}"1";"2"^-{\theta}_-{\cong}
\\
S \ar[rr]_-{p} && Y 
}
 \end{aligned}
\end{equation}  
\end{itemize} 
A bicategory equipped with a calibration is called {\em calibrated}.

Notice that the class $\mathrm{GF}$ of all groupoid fibrations in any bicategory $\CM$ satisfies all the conditions for a calibration except perhaps the bipullback existence part of P3 (automatically $\tilde{p}$ will be a groupoid fibration by (ii) of Proposition~\ref{bipbproperties}).   

A bicategory $\CM$ is called {\em polynomic} when $\mathrm{GF}$ is a calibration of $\CM$. 

\end{Definition}
 
\begin{Definition} Let $\CM = (\CM,\CP)$ be a calibrated bicategory.
A {\em polynomial} $(m,S,p)$ from $X$ to $Y$ in $\CM$ is a span
$$X\xla{m}S\xra{p}Y$$
in $\CM$ with $m$ a right lifter and $p$ neat.
A {\em polynomial morphism} $(\lambda, h, \rho) : (m,S,p)\to (m',S',p')$ is a diagram
  \begin{eqnarray}\label{polymorph}
\begin{aligned}
\xymatrix{
& & S \ar[d]|-{ h} \ar[lld]_-{m} \ar[rrd]^-{ p} \ar @{} [ld] | {\stackrel{\lambda} \Leftarrow} 
\ar @{} [rd] | {\stackrel{\rho\cong} \Leftarrow} & &
\\
X   && S' \ar[ll]^-{m'} \ar[rr]_-{p'} & & Y  }
\end{aligned}
\end{eqnarray}
in which $\rho$ (but not necessarily $\lambda$) is invertible.
By part (i) of Proposition~\ref{bipbproperties} we know that $h$ must be a groupoid
fibration. (Indeed, by condition P1, $h$ is neat; this is not really needed and is the only use
made herein of the ``only if'' in P1.)
We call $(\lambda, h, \rho)$ {\em strong} when $\lambda$ is invertible. 
A 2-cell $\sigma : h\Ra k : (m,S,p)\to (m',S',p')$ is a 2-cell $\sigma : h\Ra k : S\to S'$ in $\CM$ compatible with $\lambda$ and $\rho$. By Proposition~\ref{gfibsconserve}, we know that $\sigma$ must be invertible.
Write $\mathrm{Poly}\CM(X,Y)$ for the Poincar\'e category of the bicategory of polynomials from $X$ to $Y$ so obtained.
\end{Definition}

We write $\mathbf{h}=[ \lambda, h, \rho ] : (m,S,p)\to (m',S',p')$ 
for the isomorphism class of the polynomial morphism 
$(\lambda, h, \rho) : (m,S,p)\to (m',S',p')$.
We also write $\lambda_h$ and $\rho_h$ when several morphisms are involved.   

\begin{proposition}\label{Spnpolyn}
If $\CC$ is a finitely complete category then the bicategory $\mathrm{Spn}\CC$ is polynomic.
\end{proposition}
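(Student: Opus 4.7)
The plan is to verify the four axioms P0--P3 for the class $\mathrm{GF}$ of groupoid fibrations in $\mathrm{Spn}\CC$. Axioms P0 (equivalences are neat and neatness is isomorphism-invariant) and P2 (neat implies groupoid fibration) are immediate: the former holds representably, and the latter is tautological since a neat morphism is by definition a groupoid fibration here. Axiom P1, which says that for $p$ a groupoid fibration the composite $p\circ q$ is a groupoid fibration if and only if $q$ is, is exactly part~(i) of Proposition~\ref{bipbproperties}.

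The content lies in P3. Given a cospan $S\xra{p}Y\xla{n}T$ in $\mathrm{Spn}\CC$ with $p$ a groupoid fibration and $n$ a right lifter, Proposition~\ref{charleftadj} lets me replace $p$ (up to equivalence) by $f_*$ for some $f:S\to Y$ in $\CC$, while Example~\ref{m1powerful} lets me write $n=(n_1,N,n_2)$ with $n_1:N\to T$ powerful in $\CC$. Factoring $n\cong {n_2}_{*}\circ n_1^{*}$, I will build the required bipullback in two pasted stages.

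Stage~1 pulls $f_*$ back along ${n_2}_{*}:N\to Y$. I form the ordinary pullback $\alpha:P\to N$ and $\beta:P\to S$ of $f$ and $n_2$ in $\CC$; by Proposition~\ref{pbstobipbs}, the pseudofunctor $(-)_{*}$ carries this pullback to a bipullback in $\mathrm{Spn}\CC$ with projections $\alpha_*$ and $\beta_*$. Stage~2 pulls $\alpha_*:P\to N$ back along $n_1^{*}:T\to N$. Because $n_1$ is powerful, Weber's construction produces a distributivity pullback around the composable pair $P\xra{\alpha}N\xra{n_1}T$, and Proposition~\ref{distpbsgivebipbs} identifies this with a bipullback in $\mathrm{Spn}\CC$ of precisely the cospan $(\alpha_*,n_1^{*})$. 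Pasting the two bipullbacks yields a bipullback for the original cospan $(f_*,n)$. Finally, part~(ii) of Proposition~\ref{bipbproperties} guarantees that the projection $\tilde{p}$ opposite $p=f_*$ is again a groupoid fibration, verifying the neatness clause of P3.

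The main obstacle is Stage~2: it is where the hypothesis that $n_1$ be powerful is actually consumed, via the existence theorem for distributivity pullbacks in $\CC$; the rest is straightforward pasting and translation through the structural propositions already available. One minor bookkeeping point is to ensure the pasted square is genuinely a bipullback of the cospan $(f_*,n)$ rather than of $(f_*,{n_2}_*\circ n_1^*)$, but this follows from the standard pasting lemma for bipullbacks applied to the two stages.
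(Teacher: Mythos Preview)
Your proposal is correct and follows essentially the same route as the paper's proof: reduce $p$ to $f_*$ via Proposition~\ref{charleftadj}, factor $n\cong {n_2}_*\circ n_1^*$ with $n_1$ powerful via Example~\ref{m1powerful}, build the bipullback in two stages using Proposition~\ref{pbstobipbs} for the ${n_2}_*$ part and Weber's distributivity pullback plus Proposition~\ref{distpbsgivebipbs} for the $n_1^*$ part, then paste. Your explicit treatment of P0--P2 and of the neatness of $\tilde{p}$ via Proposition~\ref{bipbproperties}(ii) merely spells out what the paper handles by the parenthetical remark in the definition of ``polynomic''.
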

\begin{proof} Take a cospan $S\xra{p}Y\xla{n}T$ in $\mathrm{Spn}\CC$ with
$p$ a groupoid fibration and $n = (n_1,F,n_2)$ a lifter.  
By Proposition~\ref{charleftadj}, we can suppose $p$ is actually $p_*$ for
some $p : S\to Y$ in $\CC$. From Example~\ref{m1powerful}, we know that $n_1$
is powerful. Form the pullback span $S\xla{f}P\xra{g}F$ of the cospan
$S\xra{p}Y\xla{n_2}F$ in $\CC$. 
By Proposition~\ref{pbstobipbs}, we have a bipullback
 \begin{equation*}
 \begin{aligned}
\xymatrix{
P \ar[d]_{f_*}^(0.5){\phantom{aaaa}}="1" \ar[rr]^{g_*}  && F \ar[d]^{(n_2)_*}_(0.5){\phantom{aaaa}}="2" \ar@{<=}"1";"2"^-{\cong}
\\
S \ar[rr]_-{p_*} && Y \ . 
}
 \end{aligned}
\end{equation*} 
Since $n_1$ is powerful, Proposition 2.2.3 of Weber \cite{Weber2015} implies we have a distributivity pullback
\begin{eqnarray*}
 \begin{aligned}
\xymatrix{
V \ar[r]^-{a} \ar[d]_-{q} & P \ar[r]^-{g} & F \ar[d]^-{n_1}\\
W \ar[rr]_-{r} & & T
}
 \end{aligned}
\end{eqnarray*}
around $(n_1,g)$. By Proposition~\ref{distpbsgivebipbs}, we have the bipullback
\begin{eqnarray*}
 \begin{aligned}
\xymatrix{
W \ar[d]_{a_*q^*}^(0.5){\phantom{aaa}}="1" \ar[rr]^{r_*}  && T \ar[d]^{(n_1)^*}_(0.5){\phantom{aaa}}="2" \ar@{<=}"1";"2"^-{ }_-{\cong}
\\
P \ar[rr]_-{g_*} && F 
}
 \end{aligned}
\end{eqnarray*}
in $\mathrm{Spn}\CC$. Paste this second bipullback on top of
the first to obtain a bipullback of the cospan $S\xra{p_*}Y\xla{(n_2)_*(n_1)^*}T$
as required. 
\end{proof}

The class of equivalences in any bicategory is a calibration.
In Section~\ref{calibsSpnMod}, we will provide an example of a calibration strictly between equivalences and $\mathrm{GP}$.

In a calibrated bicategory $\CM$, polynomials can be composed as in the diagram
\eqref{comppoly}; this is made possible by Example~\ref{lifterscompose}, Proposition~\ref{exactbipb}, condition P3 and the ``if'' part of condition P1.
Identity spans are also identity polynomials.
\begin{eqnarray}\label{comppoly}
 \begin{aligned}
\xymatrix{
& & \ar @{} [dd] | {\stackrel{\cong \theta} \Longleftarrow} P \ar[ld]_-{\tilde{n}} \ar[rd]^-{\tilde{p}} & & \\
& S \ar[ld]_-{m} \ar[rd]_-{p} & & T \ar[ld]^-{n} \ar[rd]^-{q} & \\
X & & Y & & Z}
 \end{aligned}
\end{eqnarray}
Indeed, this composition of polynomials is the effect on objects of functors
\begin{equation}\label{polycomp}
\circ : \mathrm{Poly}\CM(Y,Z)\times \mathrm{Poly}\CM(X,Y) \lra \mathrm{Poly}\CM(X,Z) \ .
\end{equation}
The effect on morphisms is defined using part (iii) of Proposition~\ref{bipbproperties} as follows. Take morphisms
$\mathbf{h} : (m,S,p) \to (m',S',p')$ and $\mathbf{k} : (n,T,q) \to (n',T',q')$.
 We have a square
\begin{equation*}
 \begin{aligned}
\xymatrix{
P \ar[d]_{h\tilde{n}}^(0.5){\phantom{aaaa}}="1" \ar[rr]^{k\tilde{p}}  && T' \ar[d]^{n'}_(0.5){\phantom{aaaa}}="2" \ar@{<=}"1";"2"^-{\psi}
\\
S' \ar[rr]_-{p'} && Y 
}
 \end{aligned}
\end{equation*} 
in which 
\begin{equation*}
\psi = (n'k\tilde{p}\xRa{\lambda_k\tilde{p}}n\tilde{p}\xRa{\theta \cong} p\tilde{n}\xRa{\rho_h\tilde{n}\cong}p'h\tilde{n}) \ .
\end{equation*}
Now we use Proposition~\ref{bipbproperties} to obtain, in obvious primed notation, a diagram
 \begin{eqnarray*}
\begin{aligned}
\xymatrix{
& & P \ar[d]|-{\ell} \ar[lld]_-{h\tilde{n}} \ar[rrd]^-{k\tilde{p}} \ar @{} [ld] | {\stackrel{\sigma} \Leftarrow} 
\ar @{} [rd] | {\stackrel{\tau \cong} \Leftarrow} & &
\\
S'   && P' \ar[ll]^-{\tilde{n}'} \ar[rr]_-{\tilde{p}'} & & T'  }
\end{aligned}
\end{eqnarray*} 
which leads to the polynomial morphism
\begin{equation*}
((\lambda_h\tilde{n})(m'\sigma)), \ell, (q'\tau)(\rho_k\tilde{p}) : (m\tilde{n},P,q\tilde{p})\lra (m'\tilde{n}',P',q'\tilde{p}') 
\end{equation*}
whose isomorphism class is the desired 
$$\mathbf{k}\circ \mathbf{h} : (n,T,q)\circ (m,S,p)\to (n',T',q')\circ (m',S',p') \ .$$ 

\begin{proposition}
There is a bicategory $\mathrm{Poly}\CM$ of polynomials in a calibrated bicategory $\CM$. The objects are those of $\CM$, the homcategories
are the $\mathrm{Poly}\CM(X,Y)$. Composition is given by the functors \eqref{polycomp}.
The vertical and horizontal stacking properties of bipullbacks provide the associativity isomorphisms.
\end{proposition}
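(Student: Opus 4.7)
The plan is to verify the bicategory axioms directly, leveraging the calibration conditions P0--P3 together with the bipullback exactness recorded in Propositions~\ref{bipbproperties} and \ref{exactbipb}. First I would confirm that the composite span $(m\tilde{n}, P, q\tilde{p})$ built from diagram~\eqref{comppoly} really is a polynomial from $X$ to $Z$: since $p$ is neat and hence a groupoid fibration by P2, Proposition~\ref{exactbipb} yields that $\tilde{n}$ is a right lifter, and Example~\ref{lifterscompose} then gives that $m\tilde{n}$ is a right lifter; neatness of $\tilde{p}$ is the statement of P3, and the ``if'' direction of P1 applied to the neat $q$ makes $q\tilde{p}$ neat. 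The identity polynomial on $X$ is $(1_X,X,1_X)$, which qualifies via P0 and the trivial right-lifter property of identities.

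Next I would verify that the morphism assignment $\mathbf{k}\circ \mathbf{h}$ defines the functors~\eqref{polycomp}. The key tool is Proposition~\ref{bipbproperties}(iii), which supplies the factorisation $(\sigma,\ell,\tau)$; its 2-cell clause shows the factorisation is unique up to a unique invertible 2-cell (invertibility using that the neat $p'$ is conservative by Proposition~\ref{gfibsconserve}). Passing to isomorphism classes then gives well-definedness, independence of representatives, preservation of identities (the factorisation can be chosen with $\ell$ an identity), and preservation of vertical composition (two candidate factorisations of the same pasted square agree up to the unique invertible 2-cell).

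For the associator and unitors I would take three composable polynomials and observe that both bracketings of their triple composite resolve to the same two-storey stack of bipullbacks of the form~\eqref{bipb2}; the horizontal and vertical pasting laws for bipullbacks then supply a canonical equivalence of the two apexes, compatible with the legs, whose isomorphism class is the associator. The unitors arise from the triviality of a bipullback whose cospan contains an identity leg, and the pentagon and triangle axioms reduce, after passage to isomorphism classes, to universal-property-forced agreements of iterated bipullback comparisons over the same pasting diagram.

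The main obstacle will be the functoriality check of~\eqref{polycomp}: one must carefully track that all choices of factorisation through Proposition~\ref{bipbproperties}(iii), including those arising from vertically composable polynomial-morphism pairs, yield identical isomorphism classes in $\mathrm{Poly}\CM$. Once this bookkeeping is settled, the associators, unitors, and coherence follow by routine stacking of bipullbacks, as anticipated in the statement.
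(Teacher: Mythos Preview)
Your proposal is correct and follows exactly the approach the paper itself takes: the paper does not supply a separate proof of this proposition, but rather constructs the composition functors in the paragraphs preceding it (using precisely Example~\ref{lifterscompose}, Proposition~\ref{exactbipb}, P3 and the ``if'' part of P1 for the composite polynomial, and Proposition~\ref{bipbproperties}(iii) for the morphism assignment), and then asserts in the statement that associativity comes from the stacking of bipullbacks. Your plan is a faithful and accurate expansion of this sketch, including the unitor and coherence verifications which the paper leaves entirely to the reader.
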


We write $\mathrm{Poly}_{\mathrm{s}}\CM$ for the sub-bicategory of 
$\mathrm{Poly}\CM$ obtained by restricting to the strong polynomial morphisms.

\begin{example}
If $\CC$ is a finitely complete category then the bicategory $\mathrm{Poly}\mathrm{Spn}\CC$ is biequivalent to the bicategory denoted by $\mathrm{Poly}_{\CC}$ in Gambino-Kock \cite{GambinoKock} and by $\mathrm{Poly}({\CC})$ in Walker \cite{Walker2018}.
Moreover, $\mathrm{Poly}_{\mathrm{s}}\mathrm{Spn}\CC$
is biequivalent to Walker's bicategory $\mathrm{Poly}_{\mathrm{c}}(\CC)$.
Note that the isomorphism classes $\mathbf{h}$ of polynomial morphisms have canonical representatives of the form $f_*$ (since each span $(u,S,v) : U \to V$ with $u$ invertible is isomorphic to $(1_U, U, v \ u^{-1})$). 
\end{example}

\begin{proposition}\label{bbH}
If the bicategory $\CM$ is calibrated then, for each $K\in \CM$, there is a pseudofunctor
$\mathbb{H}_K : \mathrm{Poly}\CM \lra \mathrm{Cat}$ taking the polynomial $X\xla{m} S\xra{p} Y$
to the composite functor 
$$\CM(K,X)\xra{\mathrm{rif}(m,-)} \CM(K,S) \xra{\CM(K,p)} \CM(K,Y) \ .$$
The 2-cell $\mathbf{h} : (m,S,p)\to (n,T,q)$ in $\mathrm{Poly}\CM$ is taken to the natural transformation obtained by the pasting 
\begin{equation*}
 \begin{aligned}
   \xymatrix{ & &  \CM(K,S) \ar[dd]|-{ \CM(K,h)} \ar[rrd]^{ \CM(K,p)} & &\\
     \CM(K,X)  \ar[rru]^{\mathrm{rif}(m,-)} \ar[rrd]_{\mathrm{rif}(n,-)} & &  \dtwocell[0.4]{ll}{\hat{\lambda}} & & \dtwocell[0.7]{ll}{\CM(K,\rho)}  \CM(K,Y) \\
     & &  \CM(K,T)  \ar[rru]_{ \CM(K,q)} & &}
 \end{aligned}
\end{equation*}
where $\hat{\lambda}$ is the mate under the adjunctions of the natural transformation $\CM(K,\lambda) : \CM(K,n)\CM(K,h)\Ra \CM(K,m)$.  
\end{proposition}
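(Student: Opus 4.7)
The plan is to construct $\mathbb{H}_K$ in four stages—objects, morphisms of $\mathrm{Poly}\CM$, unitor, compositor—and then verify the pseudofunctor coherence axioms.

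For the object and 2-cell assignments: On a polynomial $(m,S,p)$, set $\mathbb{H}_K(m,S,p)=\CM(K,p)\circ \mathrm{rif}(m,-)$, which exists since $m$ is a right lifter. On a representative $(\lambda,h,\rho)$ of $\mathbf{h}:(m,S,p)\to(n,T,q)$, define $\mathbb{H}_K(\mathbf{h})$ by the pasting in the statement: form the mate $\hat\lambda$ of $\CM(K,\lambda)$ under $\CM(K,m)\dashv\mathrm{rif}(m,-)$ and $\CM(K,n)\dashv\mathrm{rif}(n,-)$, then paste with $\CM(K,\rho)^{-1}$ (which exists since $\rho$ is invertible). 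Any 2-cell $\sigma$ between polynomial morphism representatives is automatically invertible by Proposition~\ref{gfibsconserve}, and its compatibility with the $\lambda$'s and $\rho$'s forces the two resulting mates to agree; so the assignment descends to $\pi_0$, i.e.\ to $\mathrm{Poly}\CM(X,Y)$. Functoriality on each homcategory (preservation of identities and vertical composition of the $\mathbf{h}$'s) is routine mate calculus.

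Next, unitors and compositors: For the identity polynomial $(1_X,X,1_X)$ one has $\mathrm{rif}(1_X,-)\cong \mathrm{id}$ and $\CM(K,1_X)\cong \mathrm{id}$, yielding the unit isomorphism. For composition of $(m,S,p):X\to Y$ and $(n,T,q):Y\to Z$, the composite polynomial is $(m\tilde n,P,q\tilde p)$ using the bipullback in \eqref{comppoly}. Since right lifters compose (Example~\ref{lifterscompose}) with $\mathrm{rif}(m\tilde n,-)\cong \mathrm{rif}(\tilde n,-)\circ \mathrm{rif}(m,-)$, and since Proposition~\ref{exactbipb} applied to \eqref{comppoly} supplies an invertible $\CM(K,\tilde p)\circ \mathrm{rif}(\tilde n,-)\cong \mathrm{rif}(n,-)\circ \CM(K,p)$, chaining these gives the compositor
\begin{equation*}
\mathbb{H}_K\bigl((n,T,q)\circ(m,S,p)\bigr)\;\cong\;\mathbb{H}_K(n,T,q)\circ \mathbb{H}_K(m,S,p).
\end{equation*}

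Finally, coherence: The associator axiom for $\mathbb{H}_K$ reduces, via the horizontal and vertical stacking of bipullbacks, to two applications of Proposition~\ref{exactbipb} in a stacked bipullback configuration; this mirrors the construction of the associator of $\mathrm{Poly}\CM$ itself. The unit axioms are immediate from the identity case. The main obstacle is naturality of the compositor with respect to polynomial morphisms $\mathbf{h},\mathbf{k}$: the horizontal composite $\mathbf{k}\circ\mathbf{h}$ is constructed using Proposition~\ref{bipbproperties}(iii) to lift the combined square through the bipullback, and one must show that $\mathbb{H}_K(\mathbf{k}\circ\mathbf{h})$ agrees with the horizontal pasting of $\mathbb{H}_K(\mathbf{h})$ and $\mathbb{H}_K(\mathbf{k})$. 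This is a substantial diagram chase, but every step decomposes into either a mate identity or an invocation of the universal property of a bipullback, so the verification is systematic rather than conceptually difficult.
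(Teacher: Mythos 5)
Your proposal is correct and takes essentially the same route as the paper: you establish well-definedness on 2-cells (polynomial 2-cells force the pasted mates to coincide, so the assignment descends to the Poincar\'e category $\mathrm{Poly}\CM(X,Y)$), and you obtain pseudofunctoriality from the composability of right liftings together with Proposition~\ref{exactbipb} applied to the bipullback in \eqref{comppoly}. The paper compresses the compositor and coherence into a one-line appeal to Proposition~\ref{exactbipb}; your sketch merely spells that part out in more detail.
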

\begin{proof}
We will show that polynomial 2-cells 
$\alpha : (\lambda_h,h,\rho_h)\Ra (\lambda',h',\rho') : (m,S,p)\to (n,T,g)$ are taken to identities.
Since $$\CM(K,\lambda) = \Big(\CM(K,n)\CM(K,h)\xRa{\CM(K,n)\CM(K,\alpha)}\CM(K,n)\CM(K,h')\xRa{\CM(K,\lambda')}\CM(K,m)\Big) \ ,$$
it follows that $$\hat{\lambda} = \Big(\CM(K,h)\mathrm{rif}(m,-)\xRa{\CM(K,\alpha)\mathrm{rif}(m,-)}\CM(K,h')\mathrm{rif}(m,-)\xRa{\hat{\lambda}'}\mathrm{rif}(n,-)\Big) \ .$$
Using this and that $\rho' = (g\alpha)\rho$, we have the identity $$(g\hat{\lambda})(\rho \ \mathrm{rif}(m,u)= (g\hat{\lambda}')(\rho' \ \mathrm{rif}(m,u) : f \ \mathrm{rif}(m,u)\Lra g \ \mathrm{rif}(m,u)$$ induced by $\alpha$ as claimed.  

That $\mathbb{H}_K$ is a pseudofunctor follows from Proposition~\ref{exactbipb}.
\end{proof}

We can put somewhat more structure on the image of the pseudofunctor $\mathbb{H}_K$.
Recall the definition (for example, in \cite{McCrud2000} Section 3) of the 2-category 
$\CV\text{-}\mathrm{Act}$ of $\CV$-actegories for a monoidal category $\CV$.
 
Composition in $\CM$ yields a monoidal structure on the category $\CV_K = \CM(K,K)$
and a right $\CV_K$-actegory structure on each category $\mathbb{H}_KX=\CM(K,X)$:
\begin{eqnarray*}
-\circ- : \CM(K,X)\times \CM(K,K) \lra \CK(K,X) \ .
\end{eqnarray*}
We can replace the codomain $\mathrm{Cat}$ of $\mathbb{H}_K$ in Proposition~\ref{bbH} by $\CV_K\text{-}\mathrm{Act}$.   

To see this, we need a $\CV_K$-actegory morphism structure on each functor 
$$\mathbb{H}_K(m,S,p) = p \ \mathrm{rif}(m,-) : \CM(K,X)\to \CM(K,Y) \ .$$ 
However, for each $a\in \CV_K$ and $u\in \CM(K,X)$,
 we have the canonical $m\mathrm{rif}(m,u)a\xRightarrow{\varpi a} ua$ which induces a 2-cell
  $\mathrm{rif}(m,u)a\Ra \mathrm{rif}(m,ua)$. 
  Whiskering this with $p : S \to Y$, we obtain the component at $(u,a)$ of a natural transformation:
  \begin{equation*}
 \begin{aligned}
\xymatrix{
\CM(K,X)\times \CV_K \ar[d]_{p \ \mathrm{rif}(m,-)\times 1_{\CV_K}}^(0.5){\phantom{aaaaaaaa}}="1" \ar[rr]^{-\circ -}  && \CM(K,X) \ar[d]^{p \ \mathrm{rif}(m,-)}_(0.5){\phantom{aaaaaaaa}}="2" \ar@{=>}"1";"2"^-{}
\\
\CM(K,Y)\times \CV_K \ar[rr]_-{-\circ -} && \CM(K,Y) \ . 
}
 \end{aligned}
\end{equation*} 
The axioms for an actegory morphism are satisfied and each $\mathbb{H}_K(m,S,p)$ is a 2-cell
in $\CV_K\text{-}\mathrm{Act}$.  

In fact, we have a pseudofunctor
$$\mathbb{H} : \mathrm{Poly}\CM \lra \mathrm{Hom}(\CM^{\mathrm{op}},\mathrm{Act})$$
where $\mathrm{Act}$ is the 2-category of pairs $(\CV,\CC)$ consisting of a monoidal
category $\CV$ and a category $\CC$ on which it acts.

\section{Bipullbacks from tabulations}

Tabulations in a bicategory, in the sense intended here, appeared in \cite{26} to characterize bicategories of spans.

For any bicategory $\CM$, we write $\CM_*$ for the sub-bicategory obtained by restricting to left adjoint morphisms.
For each left adjoint morphism $f:X\to Y$ in $\CM$, we write $f^*: Y\to X$ for a right adjoint.

\begin{Definition} The bicategory $\CM$ is said to {\em have tabulations from the terminal} when the following conditions hold:
\begin{itemize} 
\item[(i)] the bicategory $\CM_*$ has a terminal object $1$ with the property
that, for all objects $U$, the unique-up-to-isomorphism left-adjoint morphism 
$!_U : U\to 1$ is terminal in the category $\CM(U,1)$;
\item[(ii)] for each morphism $u : 1\to X$ in $\CM$, there is a diagram \eqref{tabu}, 
called a {\em tabulation} of $u$, in which $p : U\to X$ is a left adjoint morphism and such that the diagram
\begin{eqnarray}\label{tabunivprop}
 \begin{aligned}
\xymatrix{
\CM(K,U) \ar[d]_{\CM(K,p)}^(0.5){\phantom{aaaaaaaa}}="1" \ar[rr]^{ }  && \mathbf{1} \ar[d]^{\lceil u!_K \rceil}_(0.5){\phantom{aaaaaaaa}}="2" \ar@{=>}"1";"2"^-{\lambda}
\\
\CM(K,X) \ar[rr]_-{1_{\CM(K,X)}} && \CM(K,X) \ , 
}
 \end{aligned}
 \end{eqnarray}
where the natural transformation $\lambda$ has component 
$pw\xra{\rho_u w}u!_Uw\xra{u!}u!_K$ at $w\in \CM(K,U)$, exhibits $\CM(K,U)$
as a bicategorical comma object in $\mathrm{Cat}$.
\end{itemize}
\end{Definition}  
\begin{equation}\label{tabu}
\begin{aligned}
\xymatrix{
  & U\ar[ld]_{!_U}^(0.5){\phantom{a}}="1" \ar[rd]^{p}_(0.5){\phantom{a}}="2" \ar@{<=}"1";"2"^-{\rho_u} & 
\\
1 \ar[rr]_-{u} && X 
}
\end{aligned}
\end{equation}
\begin{remark}\label{tabrmk}
\begin{itemize}
\item[(a)] The bicategorical comma property of the diagram \eqref{tabunivprop} implies $p$ is an er-fibration in $\CM$. 
\item[(b)] Notice that condition (ii) in this Definition does agree with combined conditions T1 and 
T2 in the definition of tabulation in \cite{26} for morphisms with domain $1$. This is because all left adjoints $K\to 1$ are isomorphic to $!_K$ using condition (i) of our Definition.
\item[(c)] Using (b) and Proposition~1(d) of \cite{26}, we see that the mate $p!_U^* \Ra u$ of $\rho_u : p \Ra u !_U$ is invertible.  
Let us denote the unit of the adjunction $\ !_U \dashv \ !_U^*$ by $\eta_U : 1_U \Lra \ !_U^* \ !_U$.
So we can replace $u$ up to isomorphism by $p!_U^*$ and $\rho_u$ by $p\eta_U$.
\item[(d)] If $\CM$ has tabulations from the terminal and we have a morphism $p : U \to X$
such that \eqref{tabunivprop} has the bicategorical comma property with $u=p!_U^* : 1\to X$
and $\rho_u=p\eta_U$ then $p$ is a left adjoint.
This is because a tabulation of $u : 1\to X$ does exist in which the right leg is a left adjoint
and the comma property implies the right leg is isomorphic to $pe$ for some equivalence $e$. 
\item[(e)] Another way to express the comma object condition \eqref{tabunivprop} is to say \eqref{tabunivpropbipb} is a bipullback for $\hat{\lambda}w = \lambda_w = (pw\xra{\rho_u w}u!_Uw\xra{u!}u!_K)$.
\begin{equation}\label{tabunivpropbipb}
\begin{aligned}
\xymatrix{
\CM(K,U) \ar[d]_{\hat{\lambda}}^(0.5){\phantom{aaaaaaaa}}="1" \ar[rr]^{!}  && \mathbf{1} \ar[d]^{\lceil u!_K \rceil}_(0.5){\phantom{aaaaaaaa}}="2" \ar@{=}"1";"2"^-{}
\\
\CM(K,X)^{\mathbf{2}} \ar[rr]_-{\mathrm{cod}} && \CM(K,X)  
}
\end{aligned}
\end{equation}
\end{itemize}
\end{remark}  
Let $\mathrm{Tab}$ be the class of morphisms 
$p : U\to X$ in $\CM$ which occur in a tabulation $\eqref{tabu}$.

\begin{theorem}\label{tabuthm}
The class $\mathrm{Tab}$ is a calibration for any bicategory $\CM$ which has tabulations from the terminal. 
\end{theorem}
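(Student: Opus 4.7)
The strategy is to verify each of the four calibration axioms P0--P3 for the class $\mathrm{Tab}$, the unifying tool being Remark~\ref{tabrmk}(e): a morphism $p : U \to X$ lies in $\mathrm{Tab}$ with parameter $u : 1 \to X$ if and only if, for every $K \in \CM$, the comparison $\CM(K,U) \to \CM(K,X)/u!_K$ sending $w \mapsto \rho_u w$ is an equivalence, where $\CM(K,X)/u!_K$ denotes the slice of arrows into $u!_K$ (the bipullback \eqref{tabunivpropbipb}).

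Axioms P0 and P2 are short. Closure of $\mathrm{Tab}$ under isomorphism is immediate by composing $\rho_u$ with the given iso. The identity $1_X$ tabulates $!_X^*$: since $\CM(K,!_X^*)$ is right adjoint to $\CM(K,!_X)$ it preserves the terminal $!_K \in \CM(K,1)$, so $!_X^*!_K$ is terminal in $\CM(K,X)$ and the slice collapses to $\CM(K,X)$; any equivalence $e$ is handled by transporting this along $\CM(K,e)$. Condition P2 is exactly Remark~\ref{tabrmk}(a): the right leg of a tabulation is an er-fibration, hence a groupoid fibration.

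The heart of the proof is P3. Given neat $p : S \to Y$ of parameter $u$ and a right lifter $n : T \to Y$, I form the lift $\mathrm{rif}(n,u) : 1 \to T$ and tabulate it by $\tilde p : P \to T$ (possible by the tabulations-from-terminal hypothesis), so $\tilde p \in \mathrm{Tab}$ by construction. The composite 2-cell $n\tilde p \Ra n\,\mathrm{rif}(n,u)\,!_P \Ra u\,!_P$, built from the tabulation 2-cell of $\tilde p$ and the lifting counit $\varpi$ whiskered by $!_P$, together with the comma property of $p$ applied at $K=P$, produces $\tilde n : P \to S$ and an invertible $\theta : n\tilde p \Ra p\tilde n$; this is the candidate square. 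To verify it is a bipullback, I exhibit for each $K$ the chain of equivalences
\begin{equation*}
\CM(K,p)/_{\mathrm{ps}}\CM(K,n) \;\simeq\; \CM(K,n)/u!_K \;\simeq\; \CM(K,T)/\mathrm{rif}(n,u!_K) \;\simeq\; \CM(K,T)/\mathrm{rif}(n,u)!_K \;\simeq\; \CM(K,P),
\end{equation*}
compatible with the canonical comparison. The first step uses $p$ neat (replacing the iso-comma on $\CM(K,p)$ by a slice over $u!_K$), the second uses the adjunction $\CM(K,n) \dashv \mathrm{rif}(n,-)$, and the fourth uses $\tilde p$ neat. The main obstacle is the third equivalence, which demands the canonical isomorphism $\mathrm{rif}(n,u!_K) \cong \mathrm{rif}(n,u)!_K$. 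This follows essentially from condition~(i) of tabulations from the terminal: the adjunction $!_K \dashv !_K^*$ in $\CM$ yields $-\circ !_K^* \dashv -\circ !_K$ on hom-categories, and a short hom-set chase through this adjunction combined with the universal property of $\mathrm{rif}(n,u)$ at the base object~$1$ shows both liftings represent the same presheaf on $\CM(K,T)$.

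For P1, given neat $p : U \to X$ with parameter $u = p!_U^*$ and a morphism $q : V \to U$ with candidate parameter $v = q!_V^*$, a parallel slice-iteration argument settles both directions at once. A direct $\mathrm{Cat}$-level calculation gives
\begin{equation*}
\bigl(\CM(K,X)/u!_K\bigr)\big/\rho_u(v!_K) \;\simeq\; \CM(K,X)/pv!_K,
\end{equation*}
since an object of the left-hand iterated slice is just an arrow $b \to pv!_K$ (the compatibility over $u!_K$ being determined by post-composition with $\rho_u(v!_K)$). Composed with $p$ neat, this produces $\CM(K,U)/v!_K \simeq \CM(K,X)/pv!_K$ intertwining the two candidate tabulation maps $r \mapsto \rho_v r$ and $r \mapsto p\rho_v r$. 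Therefore the bipullback characterization of Remark~\ref{tabrmk}(e) holds for $q$ (with parameter $v$) if and only if it holds for $pq$ (with parameter $pv$); in the ``only if'' direction, Remark~\ref{tabrmk}(d) promotes the comma property of $q$ to the conclusion that $q$ is a left adjoint, so $q \in \mathrm{Tab}$, completing P1.
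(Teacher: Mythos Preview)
Your proof is correct and follows essentially the same strategy as the paper's: P2 via Remark~\ref{tabrmk}(a), P3 by tabulating $\mathrm{rif}(n,u)$ and then verifying the bipullback property representably, and P1 by a composition-of-commas argument. The only notable difference is packaging. For P1 the paper stacks the bipullback characterizing ``$p$ groupoid fibration'' (Proposition~\ref{dfib_cotensor}, in terms of $\CM(K,U)^{\mathbf{2}}$) beneath the tabulation square for $q$, whereas you work directly with iterated slices $(\CM(K,X)/u!_K)/\rho_u(v!_K) \simeq \CM(K,X)/pv!_K$; these are the same idea in different clothing. For P3 the paper verifies the bipullback by rewriting the pasted comma diagram as another pasted comma diagram, while your chain of slice equivalences does the same calculation in four labelled steps. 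One point where you are actually more careful than the paper: the isomorphism $\mathrm{rif}(n,u)!_K \cong \mathrm{rif}(n,u!_K)$, which you derive from $!_K \dashv !_K^*$, is used by the paper as well (it asserts ``$y!_K$ is the value of the right adjoint $\mathrm{rif}(m,-)$ at $z!_K$'') but without justification.
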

\begin{proof} We must prove properties P0--P3 for a calibration.
Property P0 is obvious. For P1, take $V\xra{q}U\xra{p}X$ with $p\in\mathrm{Tab}$.
By (c) and (d) of Remark~\ref{tabrmk}, $p$ can be assumed to come from the tabulation of $u = p!^*_U$.
If $q$ is to come from a tabulation it must be of $v= q!^*_V$.
If $pq$ is to come from a tabulation it must be of $w= pq!^*_V= pv$.
Contemplate the following diagram in which the $\hat{\lambda}$ comes from $v$.
\begin{equation*}
\begin{aligned}
\xymatrix{
\CM(K,V) \ar[d]_{\hat{\lambda}}^(0.5){\phantom{aaaaaaaa}}="1" \ar[rr]^{!}  && \mathbf{1} \ar[d]^{\lceil q!_V^*!_K \rceil}_(0.5){\phantom{aaaaaaaa}}="2" \ar@{=}"1";"2"^-{}
\\
\CM(K,U)^{\mathbf{2}} \ar[rr]_-{\mathrm{cod}} \ar[d]_-{\CM(K,p)^{\mathbf{2}}} && \CM(K,U) \ar[d]^-{\CM(K,p)}
\\
\CM(K,X)^{\mathbf{2}} \ar[rr]_-{\mathrm{cod}} && \CM(K,X)  
}
\end{aligned}
\end{equation*}
By Remark~\ref{tabrmk}(a), $p$ is a groupoid fibration; incidentally, this gives P2. 
So the bottom square is a bipullback (see Proposition~\ref{dfib_cotensor}).
Therefore, the top square is a bipullback if and only if the pasted square is a bipullback.
By Remark~\ref{tabrmk}(d) and (e), this says $q\in \mathrm{Tab}$ if and only if $pq\in \mathrm{Tab}$. This proves P1.

It remains to prove P3. 
We start with a cospan
$Z\xra{p}C\xla{m}B$ with $m$ a right lifter and $p\in \mathrm{Tab}$.
Put $z = p !_Z^* : 1\to C$ and tabulate $y = \mathrm{rif}(m,z) : 1\to B$ 
as $y = r \ !_Y^*$ for $r : Y\to B$ in $\mathrm{Tab}$.
Using the tabulation property of $Z$, we induce $n$ and invertible $\theta$ as in the diagram \eqref{construction} in which the triangle containing $\varpi$ exhibits the
right lifting $\mathrm{rif}(m,z)$.
\begin{equation}\label{construction}
\begin{aligned}
\xymatrix{\ar @{} [rdd] | {\stackrel{\cong \theta} \Longrightarrow}
Y \ar@/^/^-{!_Y} [rrrd] \ar[rd]_{n} \ar[dd]_-{r}   \ar @{} [rrd] | {\stackrel{ } \Longrightarrow}  &  & \\
& Z \ar[rr]^{!_Z} \ar[rd]_{p} & & 1 \ar[ld]^{z} \\
B \ar[rr]_{m} & & C  &  \ar @{} [llu] | {\stackrel{g\eta_Z} \Longrightarrow} 
}
\qquad
\xymatrix{
\\
{=}}
\qquad
\xymatrix{\ar @{} [rd] | {\stackrel{r\eta_Y}\Lra}
Y \ar[rr]^-{!_Y}  \ar[dd]_-{r} &  & 1 \ar[dd]^-{z} \ar[lldd]^-{y}   \\
& &
\\
B  \ar[rr]_{m} & & C  \ar @{} [lu] | {\stackrel{\varpi} \Longrightarrow}    
}
\end{aligned}
\end{equation}
It is the region containing $\theta$ in \eqref{construction} that we will show is a bipullback.
For all $K\in \CM$, we must show that the left-hand square in the diagram
\begin{eqnarray*}
 \begin{aligned}
\xymatrix{
\CM(K,Y) \ar[d]_{\CM(K,r)}^(0.5){\phantom{aaaaaaaa}}="1" 
\ar[rr]^{\CM(K,n)}  
&& \CM(K,Z) \ar[rr]^{} 
     \ar[d]_{\CM(K,p)}_(0.5){\phantom{aaaaaaaa}}="2" \ar@{=>}"1";"2"^-{\cong}_-{} 
     \ar@{}[d]^{\phantom{aaaaaaaa}}="3"
&& \mathbf{1} \ar[d]^{\lceil z!_K \rceil}_(0.5){\phantom{aaaaaaaa}}="4" \ar@{=>}"3";"4"^-{\lambda}_-{}
\\
\CM(K,B) \ar[rr]_-{\CM(K,m)} && \CM(K,C) \ar[rr]_-{1_{\CM(K,C)}} && \CM(K,C) 
}
 \end{aligned}
 \end{eqnarray*}
is a bipullback.
However, the right-hand square has the comma property. 
So the bipullback property of the left-hand square is equivalent to the comma property of the pasted diagram.
However, using \eqref{construction}, we see that the pasted composite is equal to
the pasted composite
\begin{eqnarray*}
 \begin{aligned}
\xymatrix{
\CM(K,Y) \ar[d]_{\CM(K,r)}^(0.5){\phantom{aaaaaaaa}}="1" 
\ar[rr]^{}  
&& \mathbf{1} \ar[rr]^{} 
     \ar[d]_{\lceil y!_K \rceil}_(0.5){\phantom{aaaaaaaa}}="2" \ar@{=>}"1";"2"^-{\lambda}_-{} 
     \ar@{}[d]^{\phantom{aaaaaaaa}}="3"
&& \mathbf{1} \ar[d]^{\lceil z!_K \rceil}_(0.5){\phantom{aaaaaaaa}}="4" \ar@{=>}"3";"4"^-{\lceil \varpi !_K \rceil}_-{}
\\
\CM(K,B) \ar[rr]_-{1_{\CM(K,B)}} && \CM(K,B) \ar[rr]_-{\CM(K,m)} && \CM(K,C) 
}
 \end{aligned}
 \end{eqnarray*}
Here, the left-hand square has the comma property and $y!_K$ is the value of the right
adjoint $\mathrm{rif}(m,-)$ to $\CM(K,m)$ at $z!_K$. So the pasted composite does
have the comma property, as required.
\end{proof}

\section{Calibrations of $\mathrm{Spn}\CC$, of $\mathrm{Rel}\CE$ and of $\mathrm{Mod}$}\label{calibsSpnMod}

If $\CC$ is a category with finite limits, its terminal object $\mathbf{1}$ clearly
has the property (i) in the definition of tabulations from the terminal.
Then, from Remark~\ref{tabrmk} and \cite{26}, we know that $\mathrm{Spn}\CC$ 
has tabulations from the terminal. 

A span from $\mathbf{1}$ to $X$ has the form $(!,U,p) : \mathbf{1}\to X$
for some $p : U\to X$ in $\CC$. A tabulation of the span 
is provided by the diagram
\begin{equation*}
\begin{aligned}
\xymatrix{
  & U\ar[ld]_{!_{U *}}^(0.5){\phantom{a}}="1" \ar[rd]^{p_*}_(0.5){\phantom{a}}="2" \ar@{<=}"1";"2"^-{\rho_u} & 
\\
1 \ar[rr]_-{(!,U,p)} && X  \ .
}
\end{aligned}
\end{equation*}
It follows that $\mathrm{Tab}$ consists of spans of the form $p_*$ for some
morphism $p$ in $\CC$. Using Proposition~\ref{charleftadj}, we deduce:
\begin{proposition}
For the bicategory $\mathrm{Spn}\CC$ of spans in a finitely complete category 
$\CC$, $\mathrm{Tab} = \mathrm{GF}$.  
\end{proposition}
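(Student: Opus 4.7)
The plan is to combine the paragraph immediately preceding the proposition (which identifies $\mathrm{Tab}$ with spans of the form $p_*$) with Proposition~\ref{charleftadj} (which identifies $\mathrm{GF}$ in $\mathrm{Spn}\CC$ with spans equivalent to some $f_*$). Since $\mathrm{Tab}$ is, by property P0, closed under invertible 2-cells, it suffices to show that in $\mathrm{Spn}\CC$ every groupoid fibration is in fact already isomorphic (not just equivalent) to a span of the form $f_*$.

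First I would dispatch the inclusion $\mathrm{Tab} \subseteq \mathrm{GF}$: any span of the form $p_*$ satisfies condition (iii) of Proposition~\ref{charleftadj}, hence condition (iv), so it is a groupoid fibration; and the groupoid fibrations are closed under invertible 2-cells, so all of $\mathrm{Tab}$ lies in $\mathrm{GF}$. For the reverse inclusion, let $(u,S,p):X\to Y$ be a groupoid fibration in $\mathrm{Spn}\CC$. By (iv)$\Rightarrow$(ii) of Proposition~\ref{charleftadj}, the leg $u:S\to X$ is an equivalence in $\CC$; but $\CC$ is a 1-category, so ``equivalence'' here collapses to ``isomorphism''. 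The span morphism with middle component $u$ and identity triangles then exhibits an invertible 2-cell $(u,S,p)\cong (1_X,X,pu^{-1})=(pu^{-1})_*$ in $\mathrm{Spn}\CC$, placing $(u,S,p)$ in $\mathrm{Tab}$.

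No step here is really an obstacle; the whole argument is essentially a bookkeeping exercise atop Proposition~\ref{charleftadj}. The only point worth flagging is precisely this bridging between ``equivalent to some $f_*$'' as it appears in (iii) of Proposition~\ref{charleftadj} and the ``isomorphic to some $p_*$'' description of $\mathrm{Tab}$ extracted from the preceding paragraph, and that bridge is trivial thanks to $\CC$ being a 1-category.
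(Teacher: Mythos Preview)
Your proposal is correct and matches the paper's approach exactly: the paper simply writes ``Using Proposition~\ref{charleftadj}, we deduce'' after observing that $\mathrm{Tab}$ consists of spans of the form $p_*$, leaving the reader to fill in precisely the bookkeeping you supply. Your explicit remark that equivalences in the 1-category $\CC$ are isomorphisms (so (ii) of Proposition~\ref{charleftadj} yields an actual isomorphism $(u,S,p)\cong (pu^{-1})_*$) is the right way to bridge (iv)$\Leftrightarrow$(iii)/(ii) with the description of $\mathrm{Tab}$.
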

With this, Theorem~\ref{tabuthm} provides another proof of Proposition~\ref{Spnpolyn}.
\bigskip

Here is the result in the case of bicategories of relations.

\begin{proposition}
The bicategory $\mathrm{Rel}\CE$ of relations in a regular category 
$\CE$ has tabulations from the terminal. Moreover the calibration $\mathrm{Tab}$
of $\mathrm{Rel}\CE$ consists of those relations isomorphic to $p_*$
for some monomorphism $p$ in $\CE$.  
\end{proposition}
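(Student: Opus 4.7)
The plan is to apply Theorem~\ref{tabuthm} by verifying that $\mathrm{Rel}\CE$ has tabulations from the terminal, and then to read off the description of $\mathrm{Tab}$ from the construction. I would begin with condition (i), which follows from standard facts about $\mathrm{Rel}\CE$ for $\CE$ regular: left adjoints are exactly the maps — relations isomorphic to $f_*$ for some morphism $f$ in $\CE$ — and any 2-cell between maps is an equality, so $\mathrm{Rel}\CE_*$ is $\CE$ regarded as a locally discrete 2-category, with terminal object $\mathbf{1}\in\CE$. The hom-poset $\mathrm{Rel}\CE(U,\mathbf{1})\cong\mathrm{Sub}(U)$ has top element represented by $(!_U)_*$, so $!_U$ is indeed terminal in $\mathrm{Rel}\CE(U,\mathbf{1})$.

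Next, for condition (ii), a morphism $u:\mathbf{1}\to X$ in $\mathrm{Rel}\CE$ is a jointly monic span $\mathbf{1}\xla{!}U\xra{p}X$, and joint monicity forces $p$ to be a monomorphism in $\CE$. I would take the tabulation of $u$ to be the diagram with right leg $p_*:U\to X$ and 2-cell $\rho_u:p_*\Ra u\circ(!_U)_*$ given by the subobject inclusion of the graph of $p$ into $U\times U\hookrightarrow U\times X$ (via $1_U\times p$). To verify the bicategorical comma property at $K$ I would exploit local posetality: $\mathrm{Rel}\CE(K,Y)=\mathrm{Sub}(K\times Y)$. A direct span computation shows that $u\circ !_K$ corresponds to the subobject $1_K\times p:K\times U\hookrightarrow K\times X$, so the comma object of $1_{\mathrm{Sub}(K\times X)}$ and $\lceil u!_K\rceil$ is the downset $\{v\in\mathrm{Sub}(K\times X):v\leq K\times U\}$, which is identified with $\mathrm{Sub}(K\times U)=\mathrm{Rel}\CE(K,U)$ via pullback along the mono $1_K\times p$. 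The canonical comparison sends $w\in\mathrm{Rel}\CE(K,U)$ to $p_*\circ w$, which is precisely the image of $w$ under $1_K\times p$; this gives the required isomorphism.

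Finally, to describe $\mathrm{Tab}$: every tabulation produced by the above has right leg of the form $p_*$ for some monomorphism $p$ in $\CE$, and conversely any such $p$ determines $u=(!,U,p):\mathbf{1}\to X$ whose tabulation has right leg $p_*$; closure under isomorphism from P0 then yields the stated description. The main obstacle is the comma object verification, but the local posetality of $\mathrm{Rel}\CE$ collapses it to the standard correspondence between $\mathrm{Sub}(K\times U)$ and the subobjects of $K\times X$ lying below $K\times U$, so once the composites are correctly identified as subobjects, everything becomes automatic.
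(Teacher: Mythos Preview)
Your argument is correct and follows the same underlying construction as the paper; the paper's own proof simply cites \cite{26} for the existence of tabulations and then observes in one line that the right leg of a tabulation of $(!,R,p):1\to X$ must be a monomorphism since the span is a relation. You have effectively unpacked that citation, giving the explicit verification of conditions (i) and (ii) for tabulations from the terminal, exploiting local posetality to reduce the comma property to the elementary bijection between $\mathrm{Sub}(K\times U)$ and the downset of $K\times U$ in $\mathrm{Sub}(K\times X)$ along the mono $1_K\times p$.

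One minor framing point: the Proposition itself does not require Theorem~\ref{tabuthm}; it only asserts that $\mathrm{Rel}\CE$ has tabulations from the terminal and describes $\mathrm{Tab}$. Theorem~\ref{tabuthm} is then what \emph{uses} this Proposition to conclude that $\mathrm{Tab}$ is a calibration. Your plan's first sentence inverts this dependency, but the mathematical content of your verification is unaffected.
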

\begin{proof}
The existence of tabulations was shown in \cite{26}.
The right leg of a tabulation of $(!,R,p) : 1\to X$ is of course $p: R\to X$
which must be a monomorphism for the span $(!,R,p)$ to be a relation.  
\end{proof}
With this and Example~\ref{forall}, we obtain a different notion of ``polynomials'' 
in a regular category; again they are the morphisms of a bicategory $\mathrm{PolyRel}\CE$.

\begin{example}\label{toposex} 
An elementary topos $\CE$ admits two basic constructions, the power object
(or relation classifier) $\mathcal{P}X$ and the partial map classifier $\widetilde{X}$; see \cite{ii, JohnstoneTT}.
Both define object assignments for monads on $\CE$. There is a distributive law 
$d_X : \mathcal{P}\widetilde{X} \to \widetilde{\mathcal{P}X}$ between the two monads.   
We claim that, for a topos $\CE$, the classifying category of $\mathrm{PolyRel}\CE$ is equivalent 
to the opposite of the Kleisli category $\CE_{\widetilde{\mathcal{P}(-)}}$ for the composite monad 
$X\mapsto \widetilde{\mathcal{P}X}$. To see this, we need some detail on the monads involved.

The (covariant) power endofunctor $\mathcal{P}$ on $\CE$ is defined on morphisms $u : X\to Y$ by direct
image $\exists_u : \mathcal{P}X\to \mathcal{P}Y$. The partial map classifier takes $u$
to $\widetilde{u} :\widetilde{X}\to \widetilde{Y}$ corresponding to the partial map 
$u : \widetilde{X}\to Y$ which is $u$ with $X$ as domain of definition. 
The unit $\sigma : 1_{\CE}\Lra \mathcal{P}$ for the monad $\mathcal{P}$ has
components $\sigma_X : X\Lra \mathcal{P}X$ corresponding to the identity relation on $X$.
Similarly, the unit $\eta : 1_{\CE}\Lra \widetilde{(-)}$ for the monad $\widetilde{(-)}$ has
components $\eta_X : X\Lra \widetilde{X}$ corresponding to the identity partial map on $X$.     

Rather than examine the multiplications for these monads, we take the 
``no iteration'' or ``mw-'' point of view (see \cite{ManesAT, ACU, MWn-it, 121}) from which
the Kleisli bicategory is easily obtained.
For $\mathcal{P}$, the extra data needed are functions
$$\CE(X,\mathcal{P}Y)\lra \CE(\mathcal{P}X,\mathcal{P}Y) \ ;$$  
they take $X\xra{f}\mathcal{P}Y$ to the supremum-preserving extension
$\mathcal{P}X\xra{f'}\mathcal{P}Y$ of $f$ along $\sigma_X$.
The Kleisli category for $\mathcal{P}$ is the classifying category $\mathrm{ClRel}\CE$ of the bicategory of relations in $\CE$.  
For $\widetilde{(-)}$, the extra data needed are functions
$$\CE(X,\widetilde{Y})\lra \CE(\widetilde{X},\widetilde{Y}) \ ;$$  
they take $X\xra{f}\widetilde{Y}$ to the bottom-preserving extension
$\widetilde{X}\xra{f_1}\widetilde{Y}$ of $f$ along $\eta_X$.
The Kleisli category for $\widetilde{(-)}$ is the classifying category $\mathrm{ClPar}\CE$ 
of the bicategory $\mathrm{Par}\CE$ of partial maps in $\CE$: 
it is the subbicategory of $\mathrm{Spn}\CE$
whose morphisms are restricted to those spans $X\xla{i}U\xra{f}Y$ for which the left leg $i$
is a monomorphism.  

To give a distributive law $d_X : \mathcal{P}\widetilde{X} \to \widetilde{\mathcal{P}X}$
is equally to give a lifting $\widehat{\mathcal{P}}$ of the monad $\mathcal{P}$ to a monad on the
Kleisli category $\mathrm{ClPar}\CE$ of $\widetilde{(-)}$. Indeed, we can lift $\mathcal{P}$
to a pseudomonad $\widehat{\mathcal{P}}$ on $\mathrm{Par}\CE$. We use the facts that
$\mathcal{P}$ preserves pullbacks of monomorphisms along arbitrary morphisms and that the square
\begin{eqnarray*}
\xymatrix{
U \ar[rr]^-{i} \ar[d]_-{\sigma_U} && X \ar[d]^-{\sigma_X} \\
\mathcal{P}U \ar[rr]_-{\exists_i} && \mathcal{P}X}
\end{eqnarray*}
is a pullback when $i$ is a monomorphism. These imply that we can define $\widehat{\mathcal{P}}$
on objects to be $\mathcal{P}$ and on partial maps by 
$$\widehat{\mathcal{P}}(X\xla{i}U\xra{f}Y) = (\mathcal{P}X\xla{\exists_i}\mathcal{P}U\xra{\exists_f}\mathcal{P}Y)$$ 
to obtain a pseudofunctor, and that $X\xla{1_X}X\xra{\sigma_X}\mathcal{P}X$ provides a pseudonatural unit.    
Again, rather than a multiplication for $\widehat{\mathcal{P}}$, we supply the functor
\begin{eqnarray*}
\mathrm{Par}\CE(X,\mathcal{P}Y)\lra \mathrm{Par}\CE(\mathcal{P}X,\mathcal{P}Y) \ , \ (X\xla{i}U\xra{f}\mathcal{P}Y) \mapsto (\mathcal{P}X\xla{\exists_i}\mathcal{P}U\xra{f'}\mathcal{P}Y) \ .
\end{eqnarray*}
The Kleisli category $\CE_{\widetilde{\mathcal{P}(-)}}$ of the composite monad $\widetilde{\mathcal{P}(-)}$ on $\CE$ is the classifying category
for the Kleisli bicategory $(\mathrm{Par}\CE)_{\widehat{\mathcal{P}}}$ of the pseudomonad 
$\widehat{\mathcal{P}}$ on $\mathrm{Par}\CE$. 

The claim at the beginning of this example will follow after we see that $\mathrm{PolyRel}\CE$ is biequivalent
to the opposite of $(\mathrm{Par}\CE)_{\widehat{\mathcal{P}}}$.  
To see this, notice that the objects of the two bicategories are the same: they are the objects of $\CE$.
Also, we have the pseudonatural equivalence
\begin{eqnarray*}
\mathrm{PolyRel}\CE(X,C) \simeq \mathrm{Par}\CE(C,\mathcal{P}X)
\end{eqnarray*}
of hom categories under which the polynomial $X\xla{(a_1,A,a_2)} Z\xra{(1_Z,Z,p)} C$
corresponds to the partial map $C\xla{p}Z\xra{a} \mathcal{P}X$ where $a$ classifies the
relation $(a_1,A,a_2)$. 

What remains is to see that span composition of polynomials transports to Kleisli composition. 
We shall write for the case $\CE = \mathrm{Set}$ and appeal to topos internal logic
to justify the argument in general.
First we look at composition in $\mathrm{PolyRel}\CE$. So that we can make use of the
notation in the construction of pseudopullback in \eqref{construction}, 
we look at the following span composite. 
\begin{eqnarray*}
\xymatrix{
& & Y\ar[rd]^-{\subseteq}_-{r} \ar[ld]_-{N} & & \\
& Z \ar[rd]_-{\subseteq}^-{p} \ar[ld]_-{A} &  & B\ar[rd]^-{\subseteq}_-{q} \ar[ld]^-{M}& \\
X & & C & & D }
\end{eqnarray*}
The object $Y$ and relation $N$ are obtained from the subobjects $Z\subseteq C$ and $B\subseteq D$, 
and relations $A$ and $M$. Referring to \eqref{construction}, we see that
\begin{eqnarray*}
Y = \{b\in B : bMc  \text{ implies }  c\in Z \}
\end{eqnarray*}
and $N$ is the restriction of the relation $M$.
Now we look at composition of the corresponding morphisms in the Kleisli bicategory;
this is given by the diagram
\begin{eqnarray*}
\xymatrix{
& & Q\ar[rd]^-{\mathrm{pr}_2} \ar[ld]_-{\mathrm{pr}_1} & & \\
& B \ar[rd]_-{m} \ar[ld]_-{q} &  & \mathcal{P}Z\ar[rd]^-{a'} \ar[ld]^-{\exists_p}& \\
D & & \mathcal{P}C & & \mathcal{P}X }
\end{eqnarray*}
in which the diamond is a pullback while $m$ and $a$ classify the relations $M$ and $A$.
We therefore have an isomorphism
\begin{eqnarray*}
Q = \{b\in B : m(b) \subseteq Z \} \cong Y
\end{eqnarray*}    
under which $q\circ \mathrm{pr}_1$ and $a'\circ \mathrm{pr}_2$ transport to $q\circ r$ and 
the classifier of $A\circ N$.

Incidentally, using this biequivalence, we can view the pseudofunctor $\mathbb{H}_K$
of Proposition~\ref{bbH} as a pseudofunctor
$$(\mathrm{Par}\CE)_{\widehat{\mathcal{P}}}^{\mathrm{op}}\lra \mathrm{Ord}$$
into ordered sets taking $C\xla{p}Z\xra{a} \mathcal{P}X$ to the order-preserving function
\begin{eqnarray*}
\mathrm{Rel}\CE(K,X)\xra{\mathrm{rif}(a,-)}\mathrm{Rel}\CE(K,Z)\xra{p\circ -}\mathrm{Rel}\CE(K,C)     
\end{eqnarray*}
whose value at a relation $(s_1,S, s_2) : K\to X$ is the relation $(c,a/s, p\circ d) : K\to C$ as in the diagram
\begin{eqnarray*}
\xymatrix{
&& a/s \ar[lld]_{p\circ d} \ar[d]_{d}^(0.5){\phantom{aaaaa}}="1" \ar[rr]^{c}  && K \ar[d]^{s}_(0.5){\phantom{aaaaa}}="2" \ar@{=>}"1";"2"^-{\leq}
\\
C && Z \ar[rr]_-{a}  \ar[ll]^-{p} && \mathcal{P}X 
}
\end{eqnarray*}
in which the square has the comma property and $s$ classifies the relation $(s_1,S, s_2)$.     
\end{example}
\bigskip

Next we look at the bicategory $\mathrm{Mod} = \CV\text{-}\mathrm{Mod}$,
where $\CV = \mathrm{Set}$; see Example~\ref{powersinMod}. 

\begin{proposition}
The bicategory $\mathrm{Mod}$ has tabulations from the terminal. 
\end{proposition}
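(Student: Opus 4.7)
The plan is to tabulate a module $u : \mathbf{1} \to X$ (equivalently, a presheaf $u : X^{\mathrm{op}} \to \mathrm{Set}$) by its category of elements $\mathrm{el}(u)$.

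For condition (i), the pseudofunctor $(-)_* : \mathrm{Cat} \to \mathrm{Mod}$ of Remark~\ref{lowerstaronbicolimits} is locally fully faithful and identifies $\mathrm{Mod}_*$ with $\mathrm{Cat}$ up to biequivalence, since every left-adjoint module is isomorphic to some $f_*$. Hence $\mathrm{Mod}_*$ inherits the terminal object $\mathbf{1}$ of $\mathrm{Cat}$. For any category $U$, $\mathrm{Mod}(U, \mathbf{1}) \simeq [U, \mathrm{Set}]$, whose terminal object is the constant-at-singleton presheaf; this is precisely $(!_U)_*(*, u) = \mathbf{1}(*, !_U u) = 1$, so $(!_U)_*$ is terminal.

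For condition (ii), I take $U = \mathrm{el}(u)$ with objects $(x, t)$, $t \in u(x)$, and $P : U \to X$ the forgetful functor $(x, t) \mapsto x$. Set $p = P_* : U \to X$, which is a left adjoint in $\mathrm{Mod}$. The 2-cell $\rho_u : p \Ra u \circ (!_U)_*$ has component at $((x', t), x)$ the map $X(x, x') \to u(x)$ sending $f \mapsto u(f)(t)$; naturality follows from functoriality of $u$.

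The crux is verifying the bicategorical comma property of the resulting square~\eqref{tabunivprop}. I plan to exhibit a pseudo-inverse to the comparison $\Phi_K : \mathrm{Mod}(K, U) \to \mathrm{Mod}(K, X)\downarrow u!_K$, $w \mapsto (pw, \lambda_w)$. Given $(v, \beta)$ with $v : K \to X$ a module and $\beta : v \Ra u!_K$ a 2-cell (concretely, maps $\beta_{x,k} : v(x, k) \to u(x)$ natural in $x$ and $k$), define $\widetilde{w} : K \to \mathrm{el}(u)$ by $\widetilde{w}((x, t), k) = \beta_{x,k}^{-1}(t) \subseteq v(x, k)$. A direct coend calculation then yields
$$p \widetilde{w}(y, k) = \int^{(x, t) \in \mathrm{el}(u)} X(y, x) \times \beta_{x,k}^{-1}(t) \;\cong\; \sum_{t \in u(y)} \beta_{y,k}^{-1}(t) \;\cong\; v(y, k),$$
where the first isomorphism uses that $P^{\mathrm{op}} : \mathrm{el}(u)^{\mathrm{op}} \to X^{\mathrm{op}}$ is a discrete opfibration, so the left Kan extension along it is computed fiberwise. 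Tracing $\rho_u \widetilde{w}$ through this iso, the resulting $\lambda_{\widetilde{w}}$ is exactly $\beta$, so $\Phi_K(\widetilde{w}) \cong (v, \beta)$. For the other direction, starting with $w : K \to \mathrm{el}(u)$ and forming $\widetilde{(pw)}$, one verifies $\widetilde{(pw)}((x, t), k) \cong w((x, t), k)$ using the discrete fibration property of $P$ to rewrite the coend class $[f : y \to x', s]$ uniquely via the lift $f : (y, t) \to (x', t')$ in $\mathrm{el}(u)$.

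The main obstacle is checking pseudonaturality in $K$ and the 2-cell compatibilities, so that $\Phi_K$ is genuinely an equivalence and not merely a bijection on objects. In essence the argument is a module-parametrized form of the classical equivalence $[\mathrm{el}(u)^{\mathrm{op}}, \mathrm{Set}] \simeq [X^{\mathrm{op}}, \mathrm{Set}]/u$ between presheaves on the category of elements and the slice of presheaves over $u$; the work is in converting that equivalence into the comma-object property stated in the definition of tabulations from the terminal.
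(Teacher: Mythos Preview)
Your proposal is essentially correct and reaches the same tabulation (the category of elements of $u$ with its projection), but your verification of the comma property proceeds by a direct coend computation rather than by invoking the paper's established machinery. The paper instead appeals to Proposition~\ref{Eler}: identifying $\mathrm{Mod}(K,X)$ with $[X^{\mathrm{op}}\times K,\mathrm{Set}]\simeq \mathrm{ERFib}(K^{\mathrm{op}}\times X)$, it observes that $\wr(u!_{K*})$ is the er-fibration $1_{K^{\mathrm{op}}}\times p : K^{\mathrm{op}}\times U \to K^{\mathrm{op}}\times X$, so that the comma $\mathrm{Mod}(K,X)/\lceil u!_{K*}\rceil$ becomes $\mathrm{ERFib}(K^{\mathrm{op}}\times X)/(1_{K^{\mathrm{op}}}\times p)\simeq \mathrm{ERFib}(K^{\mathrm{op}}\times U)\simeq \mathrm{Mod}(K,U)$. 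Your explicit inverse $\widetilde{w}((x,t),k)=\beta_{x,k}^{-1}(t)$ and the fibrewise-sum coend calculation are exactly what underlies that abstract equivalence when unpacked, so the two arguments are the same fact seen at different levels of abstraction. The paper's route is shorter once Proposition~\ref{Eler} is in hand and makes the 2-cell compatibilities (your ``main obstacle'') automatic; your route is more self-contained but requires you to check full faithfulness of $\Phi_K$ by hand.

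One small correction: your claim that $(-)_*$ ``identifies $\mathrm{Mod}_*$ with $\mathrm{Cat}$ up to biequivalence'' is not true in general, since left-adjoint modules $A\to B$ correspond to functors into the Cauchy completion of $B$, not to functors $A\to B$. What you actually need, and what the paper uses, is only that $\mathbf{1}$ is Cauchy complete, so every left adjoint $K\to \mathbf{1}$ in $\mathrm{Mod}$ is isomorphic to $!_{K*}$; this suffices for condition (i) and your subsequent argument goes through unchanged.
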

\begin{proof}
We need to check the validity of conditions (i) and (ii) defining the having of tabulations from the terminal.
Since the terminal object $\mathbf{1}$ of $\mathrm{Cat}$ is Cauchy complete (idempotents split) \cite{LawMetric}, every left-adjoint module $K \to \mathbf{1}$ is isomorphic
to $!_{K *} : K \to \mathbf{1}$ where $!_{K} : K \to \mathbf{1}$ is the unique functor. 
The module $!_{K *}$, as a functor $\mathbf{1}^{\mathrm{op}}\times K\to \mathrm{Set}$, is constant at a one-point set. So condition (i) holds. 

For condition (ii), take a module $u : \mathbf{1} \to X$ regarded as a functor
$u : X^{\mathrm{op}}\to \mathrm{Set}$.
Form the comma category $U$ of $u$ as in the square
 \begin{eqnarray}\label{elofu}
 \begin{aligned}
\xymatrix{
U \ar[d]_{p}^(0.5){\phantom{aaaaaa}}="1" \ar[rr]^{ }  && \mathbf{1} \ar[d]^{\lceil u\rceil}_(0.5){\phantom{aaaaaa}}="2" \ar@{=>}"1";"2"^-{\rho_u}
\\
X \ar[rr]_-{\mathrm{yon}_X} && [X^{\mathrm{op}}, \mathrm{Set}] \ . 
}
 \end{aligned}
 \end{eqnarray}
 The natural transformation in the square has components $\rho_{u x} : X(x,p-)\to ux$ which reinterprets as a 2-cell
 \begin{equation*}
\begin{aligned}
\xymatrix{
  & U\ar[ld]_{!_{U *}}^(0.5){\phantom{a}}="1" \ar[rd]^{p_*}_(0.5){\phantom{a}}="2" \ar@{<=}"1";"2"^-{\rho_u} & 
\\
1 \ar[rr]_-{u} && X 
}
\end{aligned}
\end{equation*}
in $\mathrm{Mod}$.
In fact, we see that $U\xra{p} X$ is $\wr u$ in the sense of Proposition~\ref{Eler}.
So $\wr u!_{K *}$ is $ K^{\mathrm{op}} \times U \xra{1_{K^{\mathrm{op}}}\times p} K^{\mathrm{op}}\times X$. Using Proposition~\ref{Eler}, we see that the comma construction $\mathrm{Mod}(K,X)/\lceil u !_{K *}\rceil$ is biequivalent to 
$$\mathrm{ERFib}(K^{\mathrm{op}}\times X)/(K^{\mathrm{op}} \times U \xra{1_{K^{\mathrm{op}}}\times p} K^{\mathrm{op}}\times X)
\sim \mathrm{ERFib}(K^{\mathrm{op}}\times U)$$
and, again by Proposition~\ref{Eler}, this is biequivalent to  $\mathrm{Mod}(K,U)$, as required for the comma property of diagram \eqref{elofu}.    
\end{proof}
\begin{corollary}
The bicategory $\mathrm{Mod}$ is calibrated by $\mathrm{Tab}$.
All morphisms are right lifters and,
up to equivalence, the neat morphisms are those of the form $p_* : E\to B$ 
where $p$ is a discrete fibration. 
\end{corollary}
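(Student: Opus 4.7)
The first sentence is immediate: Theorem~\ref{tabuthm} applied to the proposition just proved gives that $\mathrm{Tab}$ is a calibration of $\mathrm{Mod}$.

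For the claim that every morphism in $\mathrm{Mod}$ is a right lifter, my plan is to write the lifting down explicitly using the fact that $\mathrm{Mod}$ is biclosed. Given $n : Y \to Z$ and $u : K \to Z$, set
\[
\mathrm{rif}(n, u)(y, k) \;=\; \int_{z \in Z} \mathrm{Set}\bigl(n(z, y),\; u(z, k)\bigr),
\]
which always exists in $\mathrm{Set}$. Writing composition of modules as the usual coend and applying the coend--end adjunction yields the required bijection between 2-cells $n \circ v \Rightarrow u$ and 2-cells $v \Rightarrow \mathrm{rif}(n, u)$, so $\mathrm{rif}(n,u)$ has the universal property of the right lifting.

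For the characterization of the neat morphisms, I would read off the tabulation constructed in the proof of the previous proposition. A module $u : \mathbf{1} \to X$ is the same as a presheaf $u : X^{\mathrm{op}} \to \mathrm{Set}$, and the square \eqref{elofu} exhibits its tabulation with right leg $p_* : U \to X$ where $p : U \to X$ is the projection $\wr u \to X$ from the category of elements. This projection is a discrete fibration, so every morphism in $\mathrm{Tab}$ is of the form $p_*$ for a discrete fibration $p$. For the converse, given any discrete fibration $p : E \to B$, Proposition~\ref{Eler} produces a presheaf $T : B^{\mathrm{op}} \to \mathrm{Set}$ together with an equivalence $\wr T \simeq E$ over $B$; the tabulation of the module $T : \mathbf{1} \to B$ then has right leg equivalent to $p_*$, so $p_*$ is neat.

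The one delicate point I expect is not computational but terminological: the biequivalence in Proposition~\ref{Eler} is between presheaves and \emph{er-fibrations}, and any er-fibration is only equivalent (not isomorphic) over $B$ to a strict discrete fibration. This is what forces the phrase ``up to equivalence'' in the statement, and it is also why it suffices to characterize neat morphisms up to isomorphism of modules: replacing $p$ by an equivalent er-fibration replaces $p_*$ by an isomorphic module.
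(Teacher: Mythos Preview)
Your proposal is correct and supplies exactly the details the paper leaves implicit: the Corollary is stated without proof, so there is no competing argument to compare against. Your three steps --- invoking Theorem~\ref{tabuthm} for the calibration, exhibiting the right lifting via the standard end formula witnessing that $\mathrm{Mod}$ is biclosed, and reading off from the construction \eqref{elofu} in the preceding proposition that the right legs of tabulations are precisely (up to equivalence) the $p_*$ for $p$ a discrete fibration --- are the intended ones, and your closing remark correctly identifies why ``up to equivalence'' is needed.
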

\begin{example}\label{Modex}
The bicategory $\mathrm{PolyMod}$ is biequivalent 
to the opposite of the Kleisli bicategory for the composite 
$X\mapsto \mathrm{Fam}^\mathrm{op}[X^{\mathrm{op}},\mathrm{Set}]$ of the colimit-completion pseudomonad and the product-completion pseudomonad (modulo obvious size issues).

To see this, note that the coproduct completion $\mathrm{Fam}X$ of a category $X$ can be efficiently described, in the terminology
of Section 4 of \cite{ocad}, as the lax comma object
\begin{equation*}
\xymatrix{
\mathrm{Fam}X \ar[d]_{\mathrm{forget}}^(0.5){\phantom{aaa}}="1" \ar[rr]^{!}  && \mathbf{1} \ar[d]^{X}_(0.5){\phantom{aaa}}="2" \ar@{~>}"1";"2"^-{\lambda}
\\
\mathrm{Set} \ar[rr]_-{\subset} && \mathrm{Cat} 
}
\end{equation*}
so that functors $f : Y\to \mathrm{Fam}X$ are in 2-natural bijection with pairs $(\tilde{f}, \phi)$
where $\tilde{f} : Y \to \mathrm{Set}$ is a functor and $\phi : \tilde{f} \rightsquigarrow Z!_Y$ is a lax
natural transformation.    
The Grothendieck fibration construction transforms such $(\tilde{f}, \phi)$ into a commutative triangle
\begin{equation*}
\xymatrix{
E \ar[rd]_{q}\ar[rr]^{(\hat{f},q)}   && X\times Y \ar[ld]^{\mathrm{pr}_2} \\
& Y  &
}
\end{equation*}
for which the data are a discrete opfibration $q : E \to Y$ and an arbitrary functor $\hat{f} : E \to X$;
as Lawvere pointed out early in the decade of the 1970s, we might think of this as a 
{\em 2-dimensional partial map} $(q,\hat{f}) : Y\to X$ between categories.
This gives a pseudonatural equivalence of categories
\begin{eqnarray*}
[Y,\mathrm{Fam}X] \simeq 2\mathrm{Par}(Y,X)
\end{eqnarray*}

The product completion of $X$ is $\mathrm{Fam}^{\mathrm{op}}X : = \mathrm{Fam}(X^{\mathrm{op}})^{\mathrm{op}}$. Objects $(I,x)$ of $\mathrm{Fam}^{\mathrm{op}}X$ are
functors $x : I \to X$ from a small discrete category (set) $I$ to $X$, Morphisms
$(u,\xi) : (I,x) \to (J,y)$ are diagrams in $\mathrm{Cat}$ of the form
\begin{equation*}
\xymatrix{
I \ar[rd]_{x}^(0.5){\phantom{a}}="1"  && J  \ar[ll]_{u} \ar[ld]^{y}_(0.5){\phantom{a}}="2" \ar@{=>}"1";"2"^-{\theta}
\\
& X & \ . 
}
\end{equation*}  
Functors $f : Y\to \mathrm{Fam}^{\mathrm{op}}X$ correspond, up to equivalence, to spans
$Y \xla{p} E\xra{g} X$ where $p$ is a discrete fibration; we shall call such a span a
{\em 2-dimensional partial opmap} from $Y$ to $X$.
This gives a pseudonatural equivalence of categories
\begin{eqnarray}
\begin{aligned}
[Y,\mathrm{Fam}^{\mathrm{op}}X] \simeq 2\mathrm{Parop}_{\mathrm{op}}(Y,X) \ .
\end{aligned}
\end{eqnarray}
While there is a size problem with $\mathrm{Fam}^{\mathrm{op}}$ as a monad on $\mathrm{Cat}$,
we do have what would be its Kleisli bicategory, namely,  $2\mathrm{Par}_{\mathrm{op}}$ whose objects are small categories, whose homs are
the categories $2\mathrm{Par}_{\mathrm{op}}(Y,X)$, and whose composition is that of spans. 
There is also a size problem with $\mathrm{Psh}$ as a monad 
$$X \xra{k} Y  \ \mapsto  \ [X^{\mathrm{op}},\mathrm{Set}] \xra{\exists_k = \mathrm{lan}(k^{\mathrm{op}},-)} [Y^{\mathrm{op}},\mathrm{Set}]$$ 
on $\mathrm{Cat}$ but we do have its
Kleisli bicategory $\mathrm{Mod}$ whose objects are small categories, whose homs are given by
$\mathrm{Mod}(Y,X) = [X^{\mathrm{op}}\times Y,\mathrm{Set}]$, 
and composition is that of modules (see Example~\ref{powersinMod}).  
Modulo the size problem, the monad $\mathrm{Psh}$ lifts to a monad $\widehat{\mathrm{Psh}}$
on $2\mathrm{Par}_{\mathrm{op}}$: this is one way of seeing that we have a distributive law
$\partial : \mathrm{Psh}\mathrm{Fam}^{\mathrm{op}} \Lra \mathrm{Fam}^{\mathrm{op}} \mathrm{Psh}$.  
The value of $\widehat{\mathrm{Psh}}$ at a 2-partial opmap $Y \xla{p} E\xra{g} X$ is
\begin{eqnarray}\label{hatPsh}
[Y^{\mathrm{op}},\mathrm{Set}] \xla{\exists_p} [E^{\mathrm{op}},\mathrm{Set}]\xra{\exists_g} [X^{\mathrm{op}},\mathrm{Set}] \ .
\end{eqnarray}
There are a several things to be said about this most of which are better understood by looking at the equivalent span where presheaves are replaced by discrete fibrations:
\begin{eqnarray*}
\mathrm{DFib}Y \xla{p_*} \mathrm{DFib}E \xra{g_*} \mathrm{DFib}X \ .
\end{eqnarray*}
Here $g_* : \mathrm{DFib}E \to \mathrm{DFib}X$ is defined on the discrete fibration $r : F \to E$ by factoring
the composite $g\circ r : F\to X$ as $g\circ r = s \circ j$ where $j : F\to F'$ is final and $s : F' \to X$ is a discrete fibration; this uses the comprehensive factorization of functors described in \cite{6, 104}. In particular, $p_*(r) = p\circ r$ since the composite is already a discrete fibration.
It follows that, if $p : E\to X$ is a discrete fibration then so is $p_* : \mathrm{DFib}E\to \mathrm{DFib}X$.
Also, if further, the left square 
\begin{equation*}
\xymatrix{
F \ar[rr]^-{g} \ar[d]_-{q} && E \ar[d]^-{p} \\
Y \ar[rr]_-{f} && X} 
\qquad
\xymatrix{
\mathrm{DFib}F \ar[rr]^-{g_*} \ar[d]_-{q_*} && \mathrm{DFib}E \ar[d]^-{p_*} \\
\mathrm{DFib}Y \ar[rr]_-{f_*} && \mathrm{DFib}X}
\end{equation*} 
is a pullback, then so is the right square. 
Using this, we conclude that \eqref{hatPsh} is
again a 2-partial opmap and that $\widehat{\mathrm{Psh}}$ is a pseudofunctor. 
To see that the unit for the monad $\mathrm{Psh}$, which is given by Yoneda embedding 
$\mathrm{y}_X : X\to \mathrm{Psh}X$, lifts to $2\mathrm{Par}^{\mathrm{op}}$, we must see that
$\mathrm{y}_X$ seen as a 2-partial opmap, is pseudonatural in $X\in 2\mathrm{Par}_{\mathrm{op}}$;
this follows from the fact that, for all discrete fibrations $p : E\to X$, the square
\begin{equation*}
\xymatrix{
E \ar[rr]^-{E/-} \ar[d]_-{p} && \mathrm{DFib}E \ar[d]^-{p_*} \\
X \ar[rr]_-{X/-} && \mathrm{DFib}X}
\end{equation*}  
is a pullback, which is another form of the Yoneda Lemma.
Rather than examine the multiplication for $\widehat{\mathrm{Psh}}$, as in Example~\ref{toposex}, we take the 
``no iteration'' or ``mw-'' point of view. We need to supply functors
\begin{eqnarray}\label{no-itdatum}
\mathrm{P} :  2\mathrm{Par}_{\mathrm{op}}(C,\mathrm{Psh}X) \lra 2\mathrm{Par}_{\mathrm{op}}(\mathrm{Psh}C,\mathrm{Psh}X) \ .
\end{eqnarray}
An object of the domain is a span $C \xla{p} Z\xra{g} \mathrm{Psh}X$ where $p$ is a discrete fibration.
Define 
$$\mathrm{P}(C \xla{p} Z\xra{g}\mathrm{Psh}X) = (\mathrm{Psh}C \xla{p_*} \mathrm{Psh}Z\xra{\bar{g}}\mathrm{Psh}X)$$
where $\bar{g} = \mathrm{lan}(\mathrm{y}_Z, g)$ is the colimit-preserving extension of $g$.
Thus the composite of $D \xla{q} B\xra{m} \mathrm{Psh}C$ and $C \xla{p} Z\xra{g} \mathrm{Psh}X$ in the
Kleisli bicategory $(2\mathrm{Par}_{\mathrm{op}})_{\widehat{\mathrm{Psh}}}$ of the pseudomonad
$\widehat{\mathrm{Psh}}$ is the following composite of spans in $\mathrm{Cat}$.    
\begin{eqnarray*}
\xymatrix{
& & Y\ar[rd]^-{n} \ar[ld]_-{r} & & \\
& B \ar[rd]^-{m} \ar[ld]_-{q} &  & \mathrm{Psh}Z \ar[rd]^-{\bar{g}} \ar[ld]_-{\exists_p}& \\
D & & \mathrm{Psh}C & & \mathrm{Psh}X }
\end{eqnarray*}
Suppose the left square in diagram \eqref{bothpbs} is in $\mathrm{Mod}$ and the right is in $\mathrm{CAT}$.  
\begin{eqnarray}\label{bothpbs}
\begin{aligned}
\xymatrix{
K \ar[d]_{h}^(0.5){\phantom{aaaaa}}="1" \ar[rr]^{t_*}  && B \ar[d]^{m}_(0.5){\phantom{aaaaa}}="2" \ar@{=>}"1";"2"_-{\cong}^-{\theta}
\\
Z \ar[rr]_-{p_*} && C 
}
\qquad
\xymatrix{
K \ar[d]_{h}^(0.5){\phantom{aaaaaaa}}="1" \ar[rr]^{t}  && B \ar[d]^{m}_(0.5){\phantom{aaaaaaa}}="2" \ar@{=>}"1";"2"_-{\cong}^-{\phi}
\\
\mathrm{Psh}Z \ar[rr]_-{\exists_p} && \mathrm{Psh}C 
}
\end{aligned}
\end{eqnarray}
An easy evaluation shows that isomorphisms $\theta$ are in bijection with isomorphisms $\phi$. 
It follows that $Y$, $r$ and $n$ agree with the construction in \eqref{construction} and we
have the biequivalence
\begin{eqnarray*}
\mathrm{PolyMod}^{\mathrm{op}} \simeq (2\mathrm{Par}_{\mathrm{op}})_{\widehat{\mathrm{Psh}}}
\end{eqnarray*}
from which we obtain the claim of this example's first paragraph.

Incidentally, using this biequivalence, we can view the pseudofunctor $\mathbb{H}_K$
of Proposition~\ref{bbH} as the pseudofunctor
$$(2\mathrm{Par}_{\mathrm{op}})_{\widehat{\mathrm{Psh}}}^{\mathrm{op}}\lra \mathrm{Cat}$$
taking the morphism $Y\xla{p}S\xra{m}\mathrm{Psh}$ to the functor
\begin{eqnarray}
[K,\mathrm{Psh}X]\lra [K,\mathrm{Psh}Y] \ , \ \ell \mapsto \bar{\ell}
\end{eqnarray}
where $$(\bar{\ell}k)y = \sum_{s\in S_y}{\mathrm{Psh}X(ms,\ell k)}$$ 
for $k\in K$, for $y\in Y$ and for $S_y$ the fibre of $p : S\to Y$ over $y$.
\end{example}
\begin{center}
--------------------------------------------------------
\end{center}

\appendix


\begin{thebibliography}{000}

\bibitem{ACU} Thorsten Altenkirch, James Chapman, and Tarmo Uustalu, \textit{Monads need not be endofunctors}, Foundations of software science and computation structures, Lecture Notes in Comput. Sci.\textbf{6014} (Springer, 2010) 297--311.\label{ACU}

\bibitem{Bak2011} Igor Bakovi\'c, Fibrations of bicategories, \\ \url{https://www2.irb.hr/korisnici/ibakovic/groth2fib.pdf} (2011) 89pp.\label{Bak2011}

\bibitem{Ben1967} Jean B\'enabou, \textit{Introduction to bicategories}, Lecture Notes in Math. \textbf{47} (Springer, Berlin, 1967) 1--77.\label{Ben1967} 

\bibitem{Ben1975} Jean B\'enabou, \textit{Th\'eories relatives \`a un corpus}, C.R. Acad Sc. Paris \textbf{281} (S\'erie A, 17 novembre 1975) 831--834.\label{Ben1967} 

\bibitem{22} Renato Betti, Aurelio Carboni, Ross Street and Robert Walters, \textit{Variation through enrichment}, J. Pure Appl. Algebra \textbf{29} (1983) 109--127.\label{22}

\bibitem{Buckley2014} Mitchell Buckley, \textit{Fibred 2-categories and bicategories},
J. Pure Appl. Algebra \textbf{218} (2014) 1034--1074.\label{Buckley2014}

\bibitem{26} Aurelio Carboni, Stefano Kasangian and Ross Street, \textit{Bicategories of spans and relations}, J. Pure and Applied Algebra \textbf{33} (1984) 259--267.\label{26}

\bibitem{60} Brian Day and Ross Street, \textit{Monoidal bicategories and Hopf algebroids}, Advances in Math. \textbf{129} (1997) 99--157.\label{60}

\bibitem{GambinoKock} Nicola Gambino and Joachim Kock, \textit{Polynomial functors and polynomial monads}, Math. Proc. Cambridge Philos. Soc. \textbf{154} (2013) 153--192.\label{GambinoKock}

\bibitem{FreydKelly} Peter J. Freyd and G. Max Kelly, \textit{Categories of continuous functors I}, J. Pure and Applied Algebra \textbf{2} (1972) 169--191; Erratum Ibid. \textbf{4} (1974) 121.\label{FreydKelly}

\bibitem{47} Robert Gordon, A. John Power and Ross Street, \textit{Coherence for tricategories}, Memoirs of the American Math. Society \textbf{117(558)} (1995).\label{47}

\bibitem{JohnstoneTT} Peter T. Johnstone, \textit{Topos theory}, London Mathematical Society Monographs, \textbf{10} (Academic Press, 1977).\label{JohnstoneTT}

\bibitem{42} Andr\'e Joyal and Ross Street, \textit{Pullbacks equivalent to pseudopullbacks}, Cahiers de topologie et g\'eom\'etrie diff\'erentielle cat\'egoriques  \textbf{34} (1993) 153--156.\label{42}

\bibitem{ocad} G. Max Kelly, \textit{On clubs and doctrines}, Lecture Notes in Math. \textbf{420} (1974) 181--256. \label{ocad}

\bibitem{KellyBook} G. Max Kelly, \textit{Basic concepts of enriched category theory}, London Mathematical Society Lecture Note Series \textbf{64} (Cambridge University Press, Cambridge, 1982); also, Reprints in Theory and Applications of Categories, \textbf{10} (2005) 1--136. \label{KellyBook}

\bibitem{ii} G. Max Kelly and Ross Street, \textit{The elements of topoi}, Abstracts of the Sydney Category Theory Seminar 1972,  \url{http://web.science.mq.edu.au/~street/Christmas1972.pdf}.\label{ii}

\bibitem{7} G. Max Kelly and Ross Street, \textit{Review of the elements of 2-categories}, Lecture Notes in Math. \textbf{420} (1974) 75--103.\label{7}

\bibitem{121} Ross Street and Stephen Lack, \textit{On monads and warpings}, Cahiers de topologie et g\'eom\'etrie diff\'erentielle cat\'egoriques \textbf{55} (2014) 244--266.\label{121}

\bibitem{LawMetric} F. W. (Bill) Lawvere, \textit{Metric spaces, generalized logic and closed categories}, Reprints in Theory and Applications of Categories \textbf{1} (2002) pp.1--37; originally published as: \textit{Rendiconti del Seminario Matematico e Fisico di Milano} \textbf{53} (1973) 135--166.\label{LawMetric}

\bibitem{ManesAT} Ernest G. Manes, \textit{Algebraic Theories}, Graduate Texts in Mathematics \textbf{26}
(Springer-Verlag, New York- Heidelberg, 1976).\label{ManesAT}

\bibitem{MWn-it} Francesco  Marmolejo and Richard J. Wood, \textit{No-iteration pseudomonads}, Theory Appl. Categ. \textbf{28(14)} (2013) 371--402.\label{MWn-it}

\bibitem{McCrud2000} Paddy McCrudden, \textit{Categories of Representations of Coalgebroids}, Advances in Mathematics \textbf{154} (2000) 299--332.\label{McCrud2000}

\bibitem{8} Ross Street, \textit{Fibrations and Yoneda's lemma in a 2-category}, Lecture Notes in Math. \textbf{420} (1974) 104--133.\label{8}

\bibitem{14} Ross Street, \textit{Fibrations in bicategories}, Cahiers de topologie et g\'eom\'etrie diff\'erentielle \textbf{21} (1980) 111--160.\label{14}

\bibitem{104} Ross Street and Dominic Verity, \textit{The comprehensive factorization and torsors}, Theory and Applications of Categories \textbf{23(3)} (2010) 42--75.\label{104}

\bibitem{6} Ross Street and Robert F.C. Walters, \textit{The comprehensive factorization of a functor}, Bulletin American Math. Soc. \textbf{79} (1973) 936--941.\label{6}

\bibitem{12} Ross Street and Robert F.C. Walters, \textit{Yoneda structures on 2-categories}, J. Algebra \textbf{50} (1978) 350--379.\label{12}

\bibitem{Walker2018} Charles Walker, \textit{Universal properties of bicategories of polynomials}, J. Pure and Applied Algebra, \url{https://doi.org/10.1016/j.jpaa.2018.12.004}.
\label{Walker2018}

\bibitem{Weber2015} Mark Weber, \textit{Polynomials in categories with pullbacks}, Theory and Applications of Categories \textbf{30(16)} (2015) 533--598.\label{Weber2015}

\end{thebibliography}
\end{document}